\newtheorem{theorem}{Theorem}[section]
\theoremstyle{plain}
\newtheorem{corollary}{Corollary}[section]
\newtheorem{definition}{Definition}[section]
\newtheorem{example}{Example}[section]
\newtheorem{lemma}{Lemma}[section]
\numberwithin{equation}{section} \textheight  22 true cm \textwidth  15 true cm \setlength{\oddsidemargin}{0mm} \setlength{\evensidemargin}{0mm}
\begin{document}
\title[Conformal Semi-invariant submersions]{%
CONFORMAL SEMI-INVARIANT SUBMERSIONS}
\author[Akyol \lowercase{and} \c{S}ahin]{M\lowercase{ehmet} A\lowercase{kif} AKYOL, B\lowercase{ayram} \c{S}AH\.{I}N}
\address{Bing\"{o}l University, Deparment of Mathematics, 12000, Bing\"{o}l,
Turkey}
\email{makyol@bingol.edu.tr}
\address{In\"{o}n\"{u} university, Deparment of Mathematics, 44280, Malatya,
Turkey}
\email{bayram.sahin@inonu.edu.tr}
\subjclass[2010]{Primary 53C42, 53C43; Secondary 53C15}
\keywords{K\"{a}hler manifold, Riemannian submersion, Semi-invariant submersion, Conformal submersion, Conformal semi-invariant submersion.}

\begin{abstract}
As a generalization of semi-invariant submersions,
we introduce conformal semi-invariant submersions from almost Hermitian
manifolds onto Riemannian manifolds. We give examples, investigate the
geometry of foliations which are arisen from the definition of a conformal
submersion and show that there are certain product structures on the total space
of a conformal semi-invariant submersion. Moreover, we also check the
harmonicity of such submersions and find necessary and sufficient conditions
of a conformal semi-invariant submersion to be totally geodesic.
\end{abstract}

\maketitle

\section{Introduction}
One of the main method to compare two manifolds and transfer certain structures
from a manifold to another manifold is to define appropriate smooth maps
between them. Given two manifolds, if the rank of a differential map is
equal to the dimension of the source manifold, then such maps are called
immersions and if the rank of a differential map is equal to the target
manifold, then such maps are called submersions. Moreover, if these maps are
isometry between manifolds, then the immersion is called isometric immersion
(Riemannian submanifold) and the submersion is called Riemannian submersion.
Riemannian submersions between Riemannian manifolds were studied by O'Neill \cite{O}
and Gray \cite{Gray}, for recent developments on the geometry of Riemannian submanifolds
and Riemannian submersions, see: \cite{C} and \cite{FIP}, respectively.

The theory of submanifolds of K\"{a}hler manifolds is one of the
important branches of differential geometry. A submanifold of a
K\"{a}hler manifold is a complex (invariant) submanifold if the
tangent space of the submanifold at each point is invariant with
respect to the almost complex structure of the K\"{a}hler
manifold. Besides complex submanifolds of a K\"{a}hler manifold,
there is another important class of submanifolds called totally
real submanifolds. A totally real submanifold of a K\"{a}hler
manifold $\bar{M}$ is a submanifold of $\bar{M}$ such that the
almost complex structure $J$ of $\bar{M}$ carries the tangent
space of
the submanifold at each point into its normal space and the main properties of  such submanifolds established in \cite{Chen-Ogiue}, \cite{Ogiue} and  \cite{Yano-Kon1}. On the other hand, CR-submanifolds were defined by Bejancu \cite{Bejancu} as a
generalization of complex and totally real submanifolds. A
CR-submanifold is called proper if it is neither  complex nor
totally real submanifold.   Many authors
have studied above real submanifolds in various ambient manifolds and many interesting
results were obtained, see (\cite{C}, page: 322) for a survey on all these results.

As analogue of holomorphic submanifolds, holomorphic submersions were introduced by Watson \cite{Watson}
in seventies by using the notion of almost complex map.  We note that almost Hermitian submersions have been
extended to the almost contact manifolds \cite{Domingo}, locally
conformal K\"{a}hler manifolds \cite{Marrero-Rocha}, quaternion
K\"{a}hler manifolds
\cite{Ianus-Mazzocco-Vilcu} and Paraquaternionic manifold \cite{Candarella}, \cite{Vilcu}, see \cite{FIP} for holomorphic submersions and their extensions to the other manifolds. The main property of such
maps is that their vertical distributions are invariant with respect to almost complex map of total space. Therefore, the second author of the present paper
considered a new submersion defined on an almost Hermitian manifold such that the vertical distribution
is anti-invariant with respect to almost complex structure \cite{Sahin1}. He showed that such submersions have rich
geometric properties and they are useful for investigating the geometry of the total space. This new class of
submersions which is called anti-invariant submersions can be seen as an analogue of totally real submanifolds
in the submersion theory.
As a
generalization of holomorphic submersions and anti-invariant
submersions,  the second author introduced semi-invariant submersions from
almost Hermitian manifolds onto Riemannian manifolds and then he studied the geometry of such
maps in \cite{Sahin3}. We recall that a Riemannian submersion $F$ from an almost
Hermitian manifold$(M, J_{_M}, g_{_M})$ with almost complex
structure $J_{_M}$ to a Riemannian manifold $(N,g_{_N})$ is called a
semi-invariant submersion if the fibers have differentiable
distributions $D$ and $D^{\perp}$ such that  $D$ is invariant with
respect to $J_{_M}$ and its orthogonal complement $D^{\perp}$ is
totally real distribution, i.e, $J_{_M}(D^{\perp}_p)\subseteq (ker
F_*)^\perp.$ Obviously, almost Hermitian submersions \cite{Watson} and
anti-invariant submersions\cite{Sahin1} are semi-invariant submersions with $D^\perp=\{0\}$ and $D=\{0\}$, respectively. These new submersions have been studied in different total spaces, see:\cite{Erken-Murathan}, \cite{Lee}, \cite{Park1}, \cite{Park2}, \cite{Park3}, \cite{Fatima-Shahid}.

On the other hand, as a generalization of Riemannian submersions,
horizontally conformal submersions are defined as follows \cite{Baird-Wood}: Suppose
that $(M,g_{M})$ and $(B,g_{B})$ are Riemannian manifolds and $F:M\longrightarrow B$ is a smooth submersion, then $F$ is called a
horizontally conformal submersion, if there is a positive function $\lambda$ such that
\begin{equation*}
\lambda^{2}g_{M}(X,Y)=g_{B}(F_{*}X,F_{*}Y)
\end{equation*}
for every $X,Y\in\Gamma((kerF_{*})^\perp).$ It is obvious that every
Riemannian submersion is a particular horizontally conformal submersion with
$\lambda=1$. We note that horizontally conformal submersions are special
horizontally conformal maps which were introduced independently by Fuglede
\cite{F} and Ishihara \cite{I}. We also note that a horizontally conformal
submersion $F:M\longrightarrow B$ is said to be horizontally homothetic if
the gradient of its dilation $\lambda$ is vertical, i.e.,
\begin{equation}
\mathcal{H}(grad\lambda)=0 \label{e.q:1.1}
\end{equation}
at $p\in M$, where $\mathcal{H}$ is the projection on the horizontal space
$(kerF_{*})^{\perp}$.
One can see that Riemannian submersions are very special maps comparing with
conformal submersions. Although conformal maps does not preserve distance
between points contrary to isometries, they preserve angles between vector
fields. This property enables one to transfer certain properties of a
manifold to another manifold by deforming such properties.

As a generalization of holomorphic submersions, conformal holomorphic submersions were studi-ed by Gudmundsson and Wood \cite{GW}.
They obtained necessary and sufficient conditions for conformal holomorphic submersions to be a harmonic morphism, see also
\cite{Chi1}, \cite{Chi2} and \cite{Chi3} for the harmonicity of conformal holomorphic submersions. Moreover, in \cite{Akyol-Sahin}, we introduce conformal anti-invariant submersions, give examples and investigate the geometry of such submersions.

 In this paper, we study conformal semi-invariant submersions as a generalization of semi-invariant submersions and  investigate the geometry of the total space and the base space for the existence of such submersions.

The paper is organized as follows. In the second section, we gather main
notions and formulas for other sections. In section 3, we introduce conformal
semi-invariant submersions from almost Hermitian manifolds onto Riemannian
manifolds, give examples and investigate the geometry of leaves of the
horizontal distribution and the vertical distribution. In this section we also show that
there are certain product structures on the total space of a conformal
semi-invariant submersion. In section 4, we find necessary and sufficient
conditions for a conformal semi-invariant submersion to be harmonic and totally geodesic, respectively.

\section{Preliminaries}

In this section, we define almost Hermitian manifolds, recall the notion of
(horizontally) conformal submersions between Riemannian manifolds and give a
brief review of basic facts of (horizontally) conformal submersions. Let
$(M,g_{M})$ be an almost Hermitian manifold. This means \cite{YK} that $M$
admits a tensor field $J$ of type (1,1) on $M$ such that, $\forall X,Y\in\Gamma(TM)$, we have
\begin{equation}
J^{2}=-I,\,\,\, g_{M}(X,Y)=g_{M}(JX,JY). \label{e.q:2.1}
\end{equation}
An almost Hermitian manifold $M$ is called K\"{a}hler manifold if
\begin{equation}
(\nabla^{^M}_{X}J)Y=0,\mbox{ } \forall X,Y\in\Gamma(TM),  \label{e.q:2.2}
\end{equation}
where $\nabla^{^M}$ is the Levi-Civita connection on $M$. Conformal  submersions
belong to a wide class of conformal maps that we are going to recall their
definition, but we will not study such maps in this paper.

\begin{definition}\label{de1}
\textit{(\cite{Baird-Wood})} Let $\varphi:(M^{m},g)\longrightarrow (N^{n},h)$ be a
smooth map between Riemannian mani-folds, and let $x\in M$. Then $\varphi$ is
called horizontally weakly conformal or semi conformal at $x$ if either

\begin{enumerate}
\item[(i)] $d\varphi_{x}=0$, or
\item[(ii)] $d\varphi_{x}$ maps the horizontal space $\mathcal{H}%
_{x}=(ker(d\varphi_{x}))^\perp$ conformally onto $T_{\varphi_{*}}N$, i.e., $%
d\varphi_{x}$ is surjective and there exists a number $\Lambda(x)\neq0$ such
that
\begin{equation}
h(d\varphi_{x}X,d\varphi_{x}Y)=\Lambda(x)g(X,Y)\mbox{ }(X,Y\in\mathcal{H}%
_{x}).
\end{equation}
\end{enumerate}
\end{definition}
Note that we can write the last equation more succinctly as
\begin{equation*}
(\varphi^{*}h)_{x}\mid_{\mathcal{H}_{x}\times\mathcal{H}_{x}}=%
\Lambda(x)g_{x}\mid_{\mathcal{H}_{x}\times\mathcal{H}_{x}}.
\end{equation*}
With the above definition of critical point, a point $x$ is of type (i) in
Definition \ref{de1} if and only if it is a critical point of $\varphi$; we shall call
a point of type (ii) a \textit{regular point}. At a critical point, $%
d\varphi_{x}$ has rank $0$; at a regular point, $d\varphi_{x}$ has rank $n$
and $\varphi$ is submersion. The number $\Lambda(x)$ is called the \textit{%
square dilation} (of $\varphi$ at $x$); it is necessarily non-negative; its
square root $\lambda(x)=\sqrt{\Lambda(x)}$ is called the dilation (of $%
\varphi$ at $x$). The map $\varphi$ is called \textit{horizontally weakly
conformal} or \textit{semi conformal} (on $M$) if it is horizontally weakly
conformal at every point of $M$. It is clear that if $\varphi$ has no
critical points, then we call it a (\textit{horizontally}) conformal
submersion.

Next, we recall the following definition from \cite{Baird-Wood}. Let
$F:M\longrightarrow N$ be a submersion. A vector field $E$ on $M$ is said
to be projectable if there exists a vector field $\check{E}$ on $N$, such
that $F_*(E_{x})=\check{E}_{F(x)}$ for all $x\in M$. In this case $E$ and
$\check{E}$ are called $F-$ related. A horizontal vector field $Y$ on $%
(M,g)$ is called basic, if it is projectable. It is well known fact, that is
$\check{Z}$ is a vector field on $N$, then there exists a unique basic
vector field $Z$ on $M$, such that $Z$ and $\check{Z}$ are $F-$ related.
The vector field $Z$ is called the horizontal lift of $\check{Z}$.

The fundamental tensors of a submersion were introduced in \cite{O}. They
play a similar role to that of the second fundamental form of an immersion.
More precisely, O'neill's tensors $T$ and $A$ defined for vector fields $E,F$
on $M$ by
\begin{equation}  \label{A}
A_{E}F=\mathcal{V}\nabla^{^M}_{\mathcal{H}E}\mathcal{H}F +\mathcal{H}\nabla^{^M}_{%
\mathcal{H}E}\mathcal{V}F
\end{equation}
\begin{equation}  \label{T}
T_{E}F=\mathcal{H}\nabla^{^M}_{\mathcal{V}E}\mathcal{V}F +\mathcal{V}\nabla^{^M}_{%
\mathcal{V}E}\mathcal{H}F
\end{equation}
where $\mathcal{V}$ and $\mathcal{H}$ are the vertical and horizontal
projections (see \cite{FIP}). On the other hand, from (\ref{A}) and (\ref{T}%
), we have
\begin{equation}  \label{nvw}
\nabla^{^M}_{V}W=T_{V}W+\hat{\nabla}_{V}W
\end{equation}
\begin{equation}  \label{nvx}
\nabla^{^M}_{V}X=\mathcal{H}\nabla^{^M}_{V}X+T_{V}X
\end{equation}
\begin{equation}  \label{nxv}
\nabla^{^M}_{X}V=A_{X}V +\mathcal{V}\nabla^{^M}_{X}V
\end{equation}
\begin{equation}  \label{nxy}
\nabla^{^M}_{X}Y=\mathcal{H}\nabla^{^M}_{X}Y+A_{X}Y
\end{equation}
for $X,Y\in\Gamma((kerF_{*})^\perp)$ and $V,W\in\Gamma(kerF_{*})$, where $\hat{%
\nabla}_{V}W=\mathcal{V}\nabla^{^M}_{V}W$. If $X$ is basic, then $\mathcal{H}%
\nabla^{^M}_{V}X=A_{X}V$. It is easily seen that for $x\in M$, $X\in \mathcal{H}%
_{x}$ and $V\in\mathcal{V}_{x}$ the linear operators $T_{V}$, $%
A_{X}:T_{x}M\longrightarrow T_{x}M$ are skew-symmetric, that is
\begin{equation*}
-g(T_{V}E,G)=g(E,T_{V}G)\text{ and }-g(A_{X}E,G)=g(E,A_{X}G)
\end{equation*}
for all $E,G\in T_{x}M$. We also see that the restriction of $T$ to the
vertical distribution $T\mid_{V\times V}$ is exactly the second fundamental
form of the fibres of $F$. Since $T_{V}$ skew-symmetric we get: $F$ has
totally geodesic fibres if and only if $T\equiv0$.

We now recall the notion of harmonic maps between Riemannian manifolds. Let
$(M,g_{M})$ and $(N,g_{N})$ be Riemannian manifolds and suppose that $%
\varphi:M \longrightarrow N$ is a smooth map between them. Then the
differential of $\varphi_{*}$ of $\varphi$ can be viewed a section of the
bundle $Hom(TM,\varphi^{-1}TN) \longrightarrow M$, where $\varphi^{-1}TN$ is
the pullback bundle which has fibres $(\varphi^{-1}TN)_{p}\!\!=\!\!T_{\varphi(p)}N$,
$p\!\in\! M$. $Hom(TM,\varphi^{-1}TN)$ has a connection $\nabla$ induced from
the Levi-Civita connection $\nabla^{M}$ and the pullback connection. Then
the second fundamental form of $\varphi$ is given by
\begin{equation}
(\nabla\varphi_{*})(X,Y)=\nabla^{\varphi}_{X}\varphi_{*}(Y)-\varphi_{*}(%
\nabla^{M}_{X}Y) \label{nfixy}
\end{equation}
for $X,Y\in\Gamma(TM)$, where $\nabla^{\varphi}$ is the pullback connection.
It is known that the second fundamental form is symmetric. A smooth map $%
\varphi:(M,g_{M}) \longrightarrow (N,g_{N})$ is said to be harmonic if $%
trace(\nabla\varphi_{*})=0$. On the other hand, the tension field of $\varphi
$ is the section $\tau(\varphi)$ of $\Gamma(\varphi^{-1}TN)$ defined by
\begin{equation}
\tau(\varphi)=div\varphi_{*}=\sum_{i=1}^{m}(\nabla\varphi_{*})(e_{i},e_{i}),
\end{equation}
where $\{e_{1},...,e_{m}\}$ is the orthonormal frame on $M$. Then it follows
that $\varphi$ is harmonic if and only if $\tau(\varphi)=0$, for details,
see \cite{Baird-Wood}. Finally, we recall the following lemma from \cite{Baird-Wood}.

\begin{lemma}
\label{lem1} Suppose that $F:M\longrightarrow N$ is a horizontally conformal submersion. Then, for any
horizontal vector fields $X,Y$ and vertical fields $V,W$ we have
\begin{enumerate}
\item[(i)] $(\nabla F_*)(X,Y)=X(ln\lambda)F_*(Y)+Y(ln\lambda)F_*(X)-g(X,Y)F_*(gradln\lambda)$;
\item[(ii)] $(\nabla F_*)(V,W)=-F_*(T_{V}W)$;
\item[(iii)] $(\nabla F_*)(X,V)=-F_*(\nabla^{M}_{X}V)=-F_*(A_{X}V).$
\end{enumerate}
\end{lemma}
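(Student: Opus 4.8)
The plan is to prove the three identities separately, disposing first of the two cases with a vertical entry---which come straight out of the definition (\ref{nfixy}) of the second fundamental form together with the decompositions (\ref{nvw})--(\ref{nxy})---and then handling the horizontal--horizontal case, where the dilation $\lambda$ actually enters. Throughout, the recurring elementary fact is that $F_{*}$ annihilates $\Gamma(\ker F_{*})$.

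For (ii): since $W\in\Gamma(\ker F_{*})$ we have $F_{*}(W)=0$, so the pullback term $\nabla^{\varphi}_{V}F_{*}(W)$ in (\ref{nfixy}) vanishes and $(\nabla F_{*})(V,W)=-F_{*}(\nabla^{M}_{V}W)$. Writing $\nabla^{M}_{V}W=T_{V}W+\hat{\nabla}_{V}W$ by (\ref{nvw}) and noting that $\hat{\nabla}_{V}W\in\Gamma(\ker F_{*})$ is killed by $F_{*}$, we obtain $(\nabla F_{*})(V,W)=-F_{*}(T_{V}W)$. For (iii): again $F_{*}(V)=0$ kills the pullback term, giving $(\nabla F_{*})(X,V)=-F_{*}(\nabla^{M}_{X}V)$, which is already the middle expression; then (\ref{nxv}) gives $\nabla^{M}_{X}V=A_{X}V+\mathcal{V}\nabla^{M}_{X}V$, and discarding the vertical summand under $F_{*}$ yields $-F_{*}(A_{X}V)$.

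The substance is (i). Since $\nabla F_{*}$ is tensorial, it suffices to compute $(\nabla F_{*})(X,Y)$ for basic horizontal $X,Y$; I also introduce a basic horizontal $Z$ and write $\check{X}=F_{*}X$, $\check{Y}=F_{*}Y$, $\check{Z}=F_{*}Z$, which are genuine vector fields on $N$. For basic fields $\nabla^{\varphi}_{X}F_{*}(Y)=(\nabla^{N}_{\check{X}}\check{Y})\circ F$ and $F_{*}[X,Y]=[\check{X},\check{Y}]$ with $\mathcal{H}[X,Y]$ the horizontal lift of $[\check{X},\check{Y}]$, and since $\mathcal{V}\nabla^{M}_{X}Y$ dies under $F_{*}$ the identity reduces to evaluating $F_{*}(\mathcal{H}\nabla^{M}_{X}Y)$. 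The plan is to expand Koszul's formula for $2g_{M}(\nabla^{M}_{X}Y,Z)$ on $M$, multiply by $\lambda^{2}$, and rewrite each term using horizontal conformality in the form $\lambda^{2}g_{M}(\,\cdot\,,\,\cdot\,)=g_{N}(F_{*}\,\cdot\,,F_{*}\,\cdot\,)\circ F$ on horizontal arguments: a term such as $\lambda^{2}Xg_{M}(Y,Z)$ becomes $\bigl(\check{X}\,g_{N}(\check{Y},\check{Z})\bigr)\circ F-X(\lambda^{2})g_{M}(Y,Z)$, and the Lie-bracket terms become $g_{N}([\check{X},\check{Y}],\check{Z})\circ F$, etc. The $\bigl(\check{X}\,g_{N}(\check{Y},\check{Z})\bigr)\circ F$-type terms together with the bracket terms reassemble, via Koszul's formula on $N$, into $2g_{N}(\nabla^{N}_{\check{X}}\check{Y},\check{Z})\circ F$, while the leftover terms $X(\lambda^{2})g_{M}(Y,Z)$, etc., after writing $X(\lambda^{2})=2\lambda^{2}X(\ln\lambda)$ and $Z(\ln\lambda)=g_{M}(\mathrm{grad}\,\ln\lambda,Z)$, collect into $-2\lambda^{2}g_{M}\bigl(X(\ln\lambda)Y+Y(\ln\lambda)X-g_{M}(X,Y)\,\mathrm{grad}\,\ln\lambda,\,Z\bigr)$. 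Since $Z$ ranges over a local basic frame, this determines $\mathcal{H}\nabla^{M}_{X}Y$ as the horizontal lift of $\nabla^{N}_{\check{X}}\check{Y}$ minus that correction term; applying $F_{*}$ and substituting into $(\nabla F_{*})(X,Y)=(\nabla^{N}_{\check{X}}\check{Y})\circ F-F_{*}(\mathcal{H}\nabla^{M}_{X}Y)$ gives exactly formula (i).

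I expect the main obstacle to be the bookkeeping in this Koszul computation: one must carefully separate the part that reconstitutes $\nabla^{N}_{\check{X}}\check{Y}$ on the base from the $\ln\lambda$-correction, and justify along the way that for basic fields $\mathcal{H}[X,Y]$ is the horizontal lift of $[\check{X},\check{Y}]$ and that $X\bigl(\lambda^{2}g_{M}(Y,Z)\bigr)=\bigl(\check{X}\,g_{N}(\check{Y},\check{Z})\bigr)\circ F$. A shortcut would be to invoke the corresponding lemma of \cite{Baird-Wood} verbatim, but I would prefer to carry out the computation so as to keep the paper self-contained.
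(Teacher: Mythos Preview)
The paper does not prove this lemma at all: it is simply quoted from \cite{Baird-Wood} with the phrase ``Finally, we recall the following lemma from \cite{Baird-Wood}'' and no argument. So there is nothing to compare against on the paper's side; your proposal is strictly more than what the paper offers.

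Your argument is correct. Parts (ii) and (iii) are immediate from (\ref{nfixy}) together with (\ref{nvw}) and (\ref{nxv}), exactly as you say. For (i), the Koszul approach is standard and your bookkeeping is right: multiplying Koszul on $M$ by $\lambda^{2}$, using $\lambda^{2}g_{M}(Y,Z)=g_{N}(\check{Y},\check{Z})\circ F$ and $X(f\circ F)=(\check{X}f)\circ F$ for basic $X$, the six terms reassemble into $2g_{N}(\nabla^{N}_{\check{X}}\check{Y},\check{Z})\circ F$ plus the residual $-X(\lambda^{2})g_{M}(Y,Z)-Y(\lambda^{2})g_{M}(Z,X)+Z(\lambda^{2})g_{M}(X,Y)$, which after $X(\lambda^{2})=2\lambda^{2}X(\ln\lambda)$ and $Z(\ln\lambda)=g_{M}(\mathrm{grad}\ln\lambda,Z)$ gives precisely the correction in (i). The only small point worth making explicit in the write-up is that in the term $g_{M}(X,Y)\,\mathrm{grad}\ln\lambda$ one is tacitly using that, when paired against the horizontal $Z$, only $\mathcal{H}\,\mathrm{grad}\ln\lambda$ survives, so that after applying $F_{*}$ one indeed obtains $F_{*}(\mathrm{grad}\ln\lambda)$ as stated. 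Your final remark already anticipates the alternative the paper actually takes, namely citing \cite{Baird-Wood}; if self-containment is the goal, your computation is the right choice.
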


\section{Conformal Semi-invariant submersions}

In this section, we define conformal semi-invariant submersions from an almost
Hermitian manifold onto a Riemannian manifold, investigate the integrability
of distributions and show that there are certain product structures on the total
space of such submersions.

\begin{definition}
\label{def} Let $M$ be a complex $m$-dimensional almost Hermitian manifold
with Hermitian metric $g_{M}$ and almost complex structure $J$ and $N$ be a
Riemannian manifold with Riemannian metric $g_{N}.$ A horizontally conformal submersion
$F:M \longrightarrow N$ with dilation $\lambda$ is called conformal semi-invariant submersion if there
is a distribution $D_{1}\subseteq kerF_{*}$ such that
\begin{equation}
kerF_{*}=D_{1}\oplus D_{2}
\end{equation}
and
\begin{equation}
J(D_{1})=D_{1}, \mbox{} J(D_{2})\subseteq (kerF_{*})^\perp,
\end{equation}
where $D_{2}$ is orthogonal complementary to $D_{1}$ in $kerF_{*}$.
\end{definition}

We note that it is known that the distribution $kerF_{*}$ is integrable.
Hence, above definition implies that the integral manifold (fiber) $F^{-1}(q)
$, $q\in N$, of $kerF_{*}$ is CR-submanifold of $M$. For CR-submanifolds,
see \cite{Bejancu} and \cite{C}. We now give some examples of conformal semi-invariant submersions.
\begin{example}
Every semi-invariant submersion from an almost Hermitian manifold to
a Riemannian manifold is a conformal semi-invariant submersion with $\lambda=I$, where $I$ denotes the identity function.
\end{example}

We say that a conformal semi-invariant submersion is proper if $\lambda\neq I$.
We now present an example of a proper conformal semi-invariant submersion.
In the following $R^{2m}$ denotes the Euc-lidean $2m$-space with the standard
metric. An almost complex structure $J$ on $R^{2m}$ is said to be compatible
if $(R^{2m},J)$ is complex analytically isometric to the complex number space
$C^{m}$ with the standard flat K\"{a}hlerian metric. We denote by $J$ the
compatible almost complex structure on $R^{2m}$ defined by%
\begin{equation*}
J(a^{1},...,a^{2m})=(-a^{2},a^{1},...,-a^{2m},a^{2m-1}).
\end{equation*}
\begin{example}
\label{exm1}Let $F$ be a submersion defined by
\begin{equation*}
\begin{array}{cccc}
  F: & R^6             & \longrightarrow & R^2\\
     & (x_1,x_2,x_3,x_4,x_5,x_6) &             & (e^{x_{3}}\cos x_{5},e^{x_{3}}\sin x_{5}),
\end{array}
\end{equation*}
where $x_5\in\mathbb{R}-\{k\frac{\pi}{2},k\pi\},\ k\in\mathbb{R}$. Then it follows that
\begin{equation*}
kerF_{*}=span\{V_1=\partial x_1,\ V_2=\partial x_2,\ V_3=\partial x_4,\ V_4=\partial x_6\}
\end{equation*}
and
\begin{equation*}
(kerF_{*})^\perp=span\{X_1=e^{x_3}\cos{x_5}\partial x_3-e^{x_3}\sin{x_5}\partial x_5, \ X_2=e^{x_3}\sin{x_5}\partial x_3+e^{x_3}\cos{x_5}\partial x_5\}.
\end{equation*}
Hence we have $JV_1=V_2$, $JV_3=-e^{-x_3}\cos{x_5}X_1-e^{-x_3}\sin{x_5}X_2$ and $JV_4=e^{-x_3}\sin{x_5}X_1-e^{-x_3}\cos{x_5}X_2$. Thus it follows that
$D_1=span\{V_1,V_2\}$ and $D_2=span\{V_3,V_4\}$. Also by direct computations, we get
\begin{equation*}
F_{*}X_1=(e^{x_3})^{2}\partial y_1,\ F_{*}X_2=(e^{x_3})^{2}\partial y_2.
\end{equation*}
Hence, we have
\begin{equation*}
g_2(F_{*}X_1,F_{*}X_1)=(e^{x_3})^{2}g_1(X_1,X_1),\ \ g_2(F_{*}X_2,F_{*}X_2)=(e^{x_3})^{2}g_1(X_2,X_2),
\end{equation*}
where $g_1$ and $g_2$ denote the standard metrics (inner products) of $R^6$ and $R^2$.
Thus $F$ is a conformal semi-invariant submersion with $\lambda=e^{x_3}.$
\end{example}
We now investigate the integrability of the distributions $D_{1}$ and $D_{2}$.
\begin{lemma}
Let F be a conformal semi-invariant submersion from a K\"{a}hler
manifold $(M,g_{M},J)$ onto a Riemannian manifold $(N,g_{N})$. Then
\begin{enumerate}
\item[(i)] The distribution $D_{2}$ is always integrable.
\item[(ii)] The distribution $D_{1}$ integrable if and only if $(\nabla F_{*})(Y,JX)-(\nabla
F_{*})(X,JY)\in\Gamma(F_{*}(\mu))$
\end{enumerate}
for $X,Y\in\Gamma((kerF_{*})^\perp).$
\end{lemma}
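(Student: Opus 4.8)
The plan rests on two preliminary remarks. First, decompose the horizontal bundle as $(kerF_{*})^\perp=J(D_{2})\oplus\mu$, where $\mu$ is the orthogonal complement of $J(D_{2})$ in $(kerF_{*})^\perp$; since $TM=D_{1}\oplus D_{2}\oplus J(D_{2})\oplus\mu$ and $J$ is an isometry of $g_{M}$ with $J^{2}=-I$, one checks that $J(\mu)=\mu$. Second, because $F$ has no critical points, the horizontal conformality relation forces $F_{*}$ to restrict at each point to a conformal linear isomorphism of $(kerF_{*})^\perp$ onto the corresponding tangent space of $N$; in particular $F_{*}$ is injective on horizontal vectors and it carries the orthogonal splitting $(kerF_{*})^\perp=J(D_{2})\oplus\mu$ to the orthogonal splitting $TN=F_{*}(J(D_{2}))\oplus F_{*}(\mu)$. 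It is also useful to recall, as already noted after Definition~\ref{def}, that each fibre is a CR-submanifold of $M$ with holomorphic distribution $D_{1}$, totally real distribution $D_{2}$ and second fundamental form $T|_{kerF_{*}\times kerF_{*}}$, so that (i) is an instance of the classical fact that the totally real distribution of a CR-submanifold of a K\"ahler manifold is integrable; I will nonetheless indicate a direct computation.

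For (i): since $kerF_{*}$ is integrable, $[U,V]\in\Gamma(D_{1}\oplus D_{2})$ for $U,V\in\Gamma(D_{2})$, so $D_{2}$ is integrable if and only if every such bracket has vanishing $D_{1}$-component. Applying $J$, using (\ref{e.q:2.2}) to move $J$ through $\nabla^{M}$ and then (\ref{nvx}), the vertical component of $J[U,V]$ comes out to be $T_{U}(JV)-T_{V}(JU)$; comparing with $J[U,V]=Ja+Jb$, where $[U,V]=a+b$ with $a\in\Gamma(D_{1})$, $b\in\Gamma(D_{2})$ and $Jb\in\Gamma(J(D_{2}))$ is horizontal, gives $T_{U}(JV)-T_{V}(JU)=Ja$. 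Hence $D_{2}$ is integrable if and only if $T_{U}(JV)=T_{V}(JU)$ for all $U,V\in\Gamma(D_{2})$. To prove this identity I would pair $T_{U}(JV)$ with an arbitrary vertical field $W$: the skew-symmetry of $T_{U}$, the symmetry $T_{U}W=T_{W}U$ (which holds since $\mathcal{H}[U,W]=0$), the relations (\ref{e.q:2.1})--(\ref{e.q:2.2}), the formula (\ref{nvw}), and the orthogonality of $D_{2}$ to $J(D_{2})$ combine, through a short computation, to show that both $g_{M}(T_{U}(JV),W)$ and $g_{M}(T_{V}(JU),W)$ equal $-g_{M}(T_{W}U,JV)$; so $T_{U}(JV)=T_{V}(JU)$ and $D_{2}$ is integrable.

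For (ii): let $X,Y\in\Gamma(D_{1})$. By integrability of $kerF_{*}$ we have $[X,Y]\in\Gamma(D_{1}\oplus D_{2})$, so $D_{1}$ is integrable if and only if $g_{M}([X,Y],U)=0$ for every $U\in\Gamma(D_{2})$. For such $U$ the field $JU$ is horizontal; rewriting $g_{M}(\nabla^{M}_{X}Y,U)=g_{M}(\nabla^{M}_{X}(JY),JU)$ by (\ref{e.q:2.1})--(\ref{e.q:2.2}) and then using (\ref{nvw}) together with the horizontality of $JU$, one obtains $g_{M}(\nabla^{M}_{X}Y,U)=g_{M}(T_{X}(JY),JU)$; subtracting the same expression with $X$ and $Y$ interchanged,
\begin{equation*}
g_{M}([X,Y],U)=g_{M}\big(T_{X}(JY)-T_{Y}(JX),\,JU\big).
\end{equation*}
Both arguments on the right are horizontal, so horizontal conformality turns this into $\lambda^{-2}g_{N}\big(F_{*}(T_{X}(JY)-T_{Y}(JX)),F_{*}(JU)\big)$, and Lemma~\ref{lem1}(ii), applied to the vertical pairs $(X,JY)$ and $(Y,JX)$, gives $F_{*}(T_{X}(JY)-T_{Y}(JX))=(\nabla F_{*})(Y,JX)-(\nabla F_{*})(X,JY)$. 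Therefore
\begin{equation*}
g_{M}([X,Y],U)=\lambda^{-2}\,g_{N}\big((\nabla F_{*})(Y,JX)-(\nabla F_{*})(X,JY),\,F_{*}(JU)\big).
\end{equation*}
Since $\lambda>0$ and $JU$ runs over $J(D_{2})$ as $U$ runs over $D_{2}$, the left-hand side vanishes for all $U$ if and only if $(\nabla F_{*})(Y,JX)-(\nabla F_{*})(X,JY)$ is $g_{N}$-orthogonal to $F_{*}(J(D_{2}))$, which, by the splitting $TN=F_{*}(J(D_{2}))\oplus F_{*}(\mu)$ recorded above, means exactly $(\nabla F_{*})(Y,JX)-(\nabla F_{*})(X,JY)\in\Gamma(F_{*}(\mu))$.

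I expect the only genuinely delicate point to be the tensor identity $T_{U}(JV)=T_{V}(JU)$ in part (i): the straightforward reductions of $g_{M}([U,V],W)$ turn out to be circular, and one has to route the computation through the symmetry $T_{U}W=T_{W}U$ at precisely the right step (equivalently, to invoke the CR-submanifold theorem). Part (ii) is routine once one keeps track of the dilation $\lambda$ when moving across $F_{*}$ and identifies $F_{*}(\mu)$ with the orthogonal complement of $F_{*}(J(D_{2}))$ in $TN$.
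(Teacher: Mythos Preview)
Your proposal is correct and follows essentially the same route as the paper. For part (i) the paper simply invokes the classical CR-submanifold result (Theorem~1.1 of \cite{Bejancu}), whereas you supply the direct computation establishing $T_{U}(JV)=T_{V}(JU)$; for part (ii) your argument and the paper's are the same computation, the paper merely writing $F_{*}(\nabla^{M}_{X}JY)$ and then applying (\ref{nfixy}) in place of your explicit passage through $T_{X}JY$ and Lemma~\ref{lem1}(ii).
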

\begin{proof}
Since the fibers of conformal semi-invariant submersions from K\"{a}hler manifolds are
CR-submanifolds and $T$ is the second fundamental form of the fibers,
$(i)$  can be deduced from Theorem 1.1 of [\cite{Bejancu},p.39].
(ii) We note that the distribution $D_{1}$ integrable if and only if $%
g_{M}([X,Y],Z)=g_{M}([X,Y],W)=0$ for $X,Y\in\Gamma(D_{1}), Z\in\Gamma(D_{2})$
and $W\in\Gamma((kerF_{*})^\perp)$. Since $kerF_{*}$ is integrable, we immediately have $%
g_{M}([X,Y],W)=0$. Thus $D_{1}$ is integrable if and only if $g_{M}([X,Y],Z)=0$. Since $F$ is a conformal submersion, by using (\ref{e.q:2.1}) and
Lemma \ref{lem1} we have
\begin{equation}
g_{M}([X,Y],Z)=\frac{1}{\lambda^{2}}g_{N}(F_{*}(\nabla^{^M}_{X}JY),F_{*}JZ)-\frac{1}{%
\lambda^{2}}g_{N}(F_{*}(\nabla^{^M}_{Y}JX),F_{*}JZ).\nonumber
\end{equation}
Then using (\ref{nfixy}) we get
\begin{equation}
g_{M}([X,Y],Z)=\frac{1}{\lambda^{2}}g_{N}((\nabla F_{*})(Y,JX)-(\nabla
F_{*})(X,JY),F_{*}JZ).\nonumber
\end{equation}
Thus proof is complete.
\end{proof}
Let $F$ be a conformal semi-invariant submersion from a K\"{a}hler manifold $%
(M,g_{M},J)$ onto a Riemannian manifold $(N,g_{N})$. We denote the
complementary distribution to $JD_{2}$ in $(kerF_{*})^\perp$ by $\mu$. Then
for $V\in\Gamma(kerF_{*})$, we write
\begin{equation}
JV=\phi V+\omega V  \label{e.q:3.3}
\end{equation}
where $\phi V\in\Gamma(D_{1})$ and $\omega V\in\Gamma (JD_{2})$. Also for $%
X\in\Gamma((kerF_{*})^\perp)$, we have
\begin{equation}
JX=\mathcal{B}X+\mathcal{C}X,  \label{e.q:3.4}
\end{equation}
where $\mathcal{B}X\in\Gamma(D_{2})$ and $\mathcal{C}X\in\Gamma(\mu)$. Then
by using (\ref{e.q:3.3}), (\ref{e.q:3.4}), (\ref{nvw}) and (\ref{nvx}) we
get
\begin{equation}
(\nabla^{^M}_{V}\phi)W=\mathcal{B}T_{V}W-T_{V}\omega W \label{e.q:3.5}
\end{equation}
\begin{equation}
(\nabla^{^M}_{V}\omega)W=\mathcal{C}T_{V}W-T_{V}\phi W \label{e.q:3.6}
\end{equation}
for $V,W\in\Gamma(kerF_{*})$, where
\begin{equation*}
(\nabla^{^M}_{V}\phi)W=\hat{\nabla}_{V}\phi W-\phi\hat{\nabla}_{V}W
\end{equation*}
and
\begin{equation*}
(\nabla^{^M}_{V}\omega)W=\mathcal{H}\nabla^{^M}_{V}\omega W-\omega\hat{\nabla}_{V}W.
\end{equation*}

We now study the integrability of the distribution $(kerF_{*})^\perp$ and
then we investigate the geometry of leaves of $kerF_{*}$ and $%
(kerF_{*})^\perp$.
\begin{theorem}\label{teo1}
Let $F$ be a conformal semi-invariant submersion from a K\"{a}hler manifold $(M,g_{M},J)$ to a Riemannian manifold $(N,g_{N})$. Then the distribution $(kerF_{*})^\perp$ is integrable if and only if
\begin{equation*}
A_{Y}\omega\mathcal{B}X-A_{X}\omega\mathcal{B}Y+J(A_{Y}\mathcal{C}X-A_{X}\mathcal{C}Y)\notin\Gamma(D_1)
\end{equation*}
and
\begin{eqnarray*}
\lambda^{2}g_{N}(\nabla^{F}_{Y}F_{*}\mathcal{C}X-\nabla^{F}_{X}F_{*}\mathcal{C}Y,F_{*}JW)&=&g_{M}(A_{Y}\mathcal{B}X-A_{X}\mathcal{B}Y-\mathcal{C}Y(ln\lambda)X+\mathcal{C}X(ln\lambda)Y\\
&+&2g_{M}(X,\mathcal{C}Y)gradln\lambda,JW)
\end{eqnarray*}
for $X,Y\in\Gamma((kerF_{*})^\perp)$, $V\in\Gamma(D_1)$ and $W\in\Gamma(D_2)$.
\end{theorem}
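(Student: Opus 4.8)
The plan is to characterize integrability of $(\ker F_*)^\perp$ by testing the bracket $[X,Y]$ against both pieces of $\ker F_* = D_1 \oplus D_2$, since $(\ker F_*)^\perp$ is integrable iff $g_M([X,Y],V)=0$ for all $V\in\Gamma(D_1)$ and $g_M([X,Y],W)=0$ for all $W\in\Gamma(D_2)$. For each test I would first replace $[X,Y] = \nabla^M_X Y - \nabla^M_Y X$ and then insert $J^2=-I$ together with the decompositions \eqref{e.q:3.3}, \eqref{e.q:3.4} to move things into terms the O'Neill tensor $A$ and the second fundamental form of $F$ can handle. Concretely, for the $D_1$-component I would write $g_M([X,Y],V) = -g_M(J[X,Y],JV)$, note $JV\in\Gamma(D_1)\subseteq\ker F_*$ so it is vertical, expand $J\nabla^M_X Y = \nabla^M_X JY = \nabla^M_X(\mathcal{B}Y+\mathcal{C}Y)$ using \eqref{e.q:2.2}, and then peel off vertical parts via \eqref{nxv} and \eqref{nxy}: the $A_X\mathcal{B}Y$ and $A_X\mathcal{C}Y$ terms survive while applying $J$ again where needed, and antisymmetrizing in $X,Y$ produces the stated expression $A_Y\omega\mathcal{B}X - A_X\omega\mathcal{B}Y + J(A_Y\mathcal{C}X - A_X\mathcal{C}Y)$, whose vanishing of the $D_1$-component is the first condition.

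For the $D_2$-component I would instead use the conformality of $F$ to pass to the base: since $W\in\Gamma(D_2)$ we have $JW\in\Gamma((\ker F_*)^\perp)$, so $g_M([X,Y],W) = -g_M(J[X,Y],JW) = -\frac{1}{\lambda^2} g_N(F_*(J[X,Y]), F_*JW)$. Then I would expand $F_*(J\nabla^M_X Y) = F_*(\nabla^M_X(\mathcal{B}Y+\mathcal{C}Y))$ and use \eqref{nfixy} to convert $F_*(\nabla^M_X \mathcal{C}Y) = \nabla^F_X F_*(\mathcal{C}Y) - (\nabla F_*)(X,\mathcal{C}Y)$, applying Lemma \ref{lem1}(i) to the last term to bring in the $\mathcal{C}Y(\ln\lambda)X$, $\mathcal{C}X(\ln\lambda)Y$ and $g_M(X,\mathcal{C}Y)\mathrm{grad}\ln\lambda$ contributions. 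The vertical part $\mathcal{B}Y\in\Gamma(D_2)\subseteq\ker F_*$ contributes through $F_*(\nabla^M_X\mathcal{B}Y) = F_*(A_X\mathcal{B}Y)$ by \eqref{nxv} and Lemma \ref{lem1}(iii) (noting $F_*$ kills the vertical part $\mathcal{V}\nabla^M_X\mathcal{B}Y$). Antisymmetrizing in $X,Y$ and clearing the $\lambda^2$ factor should yield precisely the second displayed equation.

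The routine-but-delicate part is bookkeeping the horizontal/vertical splittings so that exactly the right terms land in $D_1$ versus $D_2$ versus $\mu$; in particular one has to use that $J(D_1)=D_1$, $J(D_2)\subseteq(\ker F_*)^\perp$, $JD_2 \perp \mu$, and that $\mathcal{B}X$ lands in $D_2$ while $\mathcal{C}X$ lands in $\mu$, repeatedly, to justify dropping cross terms. The main obstacle I anticipate is handling the $\nabla^M_X \mathcal{B}Y$ contribution correctly in the $D_2$-test: here $\mathcal{B}Y$ is vertical but belongs to $D_2$, so $J(\nabla^M_X\mathcal{B}Y)$ mixes a horizontal lift $A_X\mathcal{B}Y$ with a vertical piece that must be shown to be irrelevant (killed by $F_*$), and one must be careful that the factor $2g_M(X,\mathcal{C}Y)$ — rather than a single copy — arises from combining the $-g(X,Y)F_*(\mathrm{grad}\ln\lambda)$ term of Lemma \ref{lem1}(i) applied with the appropriate arguments after the antisymmetrization. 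Once these sign and coefficient checks are done, the two conditions together are equivalent to $g_M([X,Y],V)=g_M([X,Y],W)=0$, completing the proof.
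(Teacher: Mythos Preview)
Your proposal is correct and follows essentially the same route as the paper: split the integrability condition into $g_M([X,Y],V)=0$ for $V\in\Gamma(D_1)$ and $g_M([X,Y],W)=0$ for $W\in\Gamma(D_2)$, use the K\"ahler identity $J\nabla^M_X Y=\nabla^M_X(\mathcal{B}Y+\mathcal{C}Y)$ together with \eqref{nxv}--\eqref{nxy} for the $D_1$-test, and pass to the base via \eqref{nfixy} and Lemma~\ref{lem1}(i) for the $D_2$-test, the factor $2g_M(X,\mathcal{C}Y)$ arising exactly as you say from $g_M(X,\mathcal{C}Y)=-g_M(Y,\mathcal{C}X)$. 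One small slip: by \eqref{e.q:2.1} you have $g_M([X,Y],V)=+g_M(J[X,Y],JV)$, not with a minus sign; this is harmless for the overall argument but will flip intermediate signs if left uncorrected.
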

\begin{proof}
           The distribution $(kerF_{*})^\perp$ is integrable on $M$ if and only if
           $$g_{M}([X,Y],V)=0\quad \mathrm{and}\quad g_{M}([X,Y],W)=0$$
            for $X,Y\in\Gamma((kerF_{*})^\perp)$, $V\in\Gamma(D_1)$ and $W\in\Gamma(D_2)$. Using (\ref{e.q:2.1}), (\ref{e.q:2.2}) and (\ref{e.q:3.4}), we get
\begin{align*}
g_{M}([X,Y],V)=g_{M}(\nabla^{^M}_{X}\mathcal{B}Y,JV)+g_{M}(\nabla^{^M}_{X}\mathcal{C}Y,JV)
-g_{M}(\nabla^{^M}_{Y}\mathcal{B}X,JV)-g_{M}(\nabla^{^M}_{Y}\mathcal{C}X,JV).
\end{align*}
Also  using (\ref{nxy}), we get
\begin{align*}
g_{M}([X,Y],V)&=-g_{M}(\mathcal{B}Y,\nabla^{^M}_{X}JV)+g_{M}(A_{X}\mathcal{C}Y,JV)+g_{M}(\mathcal{B}X,\nabla^{^M}_{Y}JV)-g_{M}(A_{Y}\mathcal{C}X,JV).
\end{align*}
From (\ref{e.q:2.2}), (\ref{nxv}) and (\ref{e.q:3.3}), we derive
\begin{align}\label{e.q:3.7}
g_{M}([X,Y],V)&=g_{M}(A_{Y}\omega\mathcal{B}X-A_{X}\omega\mathcal{B}Y-JA_{X}\mathcal{C}Y+JA_{Y}\mathcal{C}X,V).
\end{align}
On the other hand, from (\ref{e.q:2.1}), (\ref{e.q:2.2}) and (\ref{e.q:3.4}) we derive
\begin{align*}
g_{M}([X,Y],W)&=g_{M}(\nabla^{^M}_{X}\mathcal{B}Y,JW)+g_{M}(\nabla^{^M}_{X}\mathcal{C}Y,JW)
-g_{M}(\nabla^{^M}_{Y}\mathcal{B}X,JW)-g_{M}(\nabla^{^M}_{Y}\mathcal{C}X,JW).
\end{align*}
Since $F$ is a conformal submersion, using (\ref{nfixy}) and Lemma \ref{lem1} we arrive at
\begin{align*}
g_{M}([X,Y],W)&=-\frac{1}{\lambda^2}g_{N}((\nabla F_{*})(X,\mathcal{B}Y),F_{*}JW)+\frac{1}{\lambda^2}g_{N}((\nabla F_{*})(Y,\mathcal{B}X),F_{*}JW)\\
&+\frac{1}{\lambda^2}g_{N}\{-X(ln\lambda)F_{*}\mathcal{C}Y-\mathcal{C}Y(ln\lambda)F_{*}X+g_{M}(X,\mathcal{C}Y)F_{*}(gradln\lambda)
+\nabla^{F}_{X}F_{*}\mathcal{C}Y,F_{*}JW\}\\
&-\frac{1}{\lambda^2}g_{N}\{-Y(ln\lambda)F_{*}\mathcal{C}X-\mathcal{C}X(ln\lambda)F_{*}Y+g_{M}(Y,\mathcal{C}X)F_{*}(gradln\lambda)
+\nabla^{F}_{Y}F_{*}\mathcal{C}X,F_{*}JW\}.
\end{align*}
Thus from (\ref{nfixy}) and (\ref{nxv}) we have
\begin{align*}
g_{M}([X,Y],W)&=-g_{M}(A_{X}\mathcal{B}Y,JW)+g_{M}(A_{Y}\mathcal{B}X,JW)-\frac{1}{\lambda^2}g_{M}(gradln\lambda,X)g_{N}(F_{*}\mathcal{C}Y,F_{*}JW)\\
&-\frac{1}{\lambda^2}g_{M}(gradln\lambda,\mathcal{C}Y)g_{N}(F_{*}X,F_{*}JW)+\frac{1}{\lambda^2}g_{M}(X,\mathcal{C}Y)g_{N}(F_{*}(gradln\lambda),F_{*}JW)\\
&+\frac{1}{\lambda^2}g_{N}(\nabla^{F}_{X}F_{*}\mathcal{C}Y,F_{*}JW)+\frac{1}{\lambda^2}g_{M}(gradln\lambda,Y)g_{N}(F_{*}\mathcal{C}X,F_{*}JW)\\
&+\frac{1}{\lambda^2}g_{M}(gradln\lambda,\mathcal{C}X)g_{N}(F_{*}Y,F_{*}JW)-\frac{1}{\lambda^2}g_{M}(Y,\mathcal{C}X)g_{N}(F_{*}(gradln\lambda),F_{*}JW)\\
&-\frac{1}{\lambda^2}g_{N}(\nabla^{F}_{Y}F_{*}\mathcal{C}X,F_{*}JW).
\end{align*}
Moreover, using Definition \ref{def}, we obtain
\begin{align}\label{e.q:3.8}
g_{M}([X,Y],W)&=g_{M}(A_{Y}\mathcal{B}X-A_{X}\mathcal{B}Y-\mathcal{C}Y(ln\lambda)X+\mathcal{C}X(ln\lambda)Y+2g_{M}(X,\mathcal{C}Y)gradln\lambda,JW)\notag \\
&-\frac{1}{\lambda^2}g_{N}(\nabla^{F}_{Y}F_{*}\mathcal{C}X-\nabla^{F}_{X}F_{*}\mathcal{C}Y,F_{*}JW).
\end{align}
Thus proof follows from (\ref{e.q:3.7}) and (\ref{e.q:3.8}).
\end{proof}
Next theorem gives a necessary and sufficient condition for conformal submersion to be a homothetic map.
\begin{theorem}
Let $F$ be a conformal semi-invariant submersion  from a K\"{a}hler manifold $(M,g_{M},J)$ to a Riemannian manifold $(N,g_{N})$ with integrable distribution $(kerF_{*})^\perp$. Then $F$ is a horizontally homothetic map if and only if
\begin{equation}\label{e.q:3.9}
\lambda^{2}g_{M}(A_{Y}\mathcal{B}X-A_{X}\mathcal{B}Y,JW)=g_{N}(\nabla^{F}_{Y}F_{*}\mathcal{C}X-\nabla^{F}_{X}F_{*}\mathcal{C}Y,F_{*}JW)
\end{equation}
for $X,Y\in\Gamma((kerF_{*})^\perp)$ and $W\in\Gamma(D_2)$.
\end{theorem}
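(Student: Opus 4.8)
The plan is to read the equivalence off directly from formula (\ref{e.q:3.8}), which was obtained in the proof of Theorem \ref{teo1}. Since $(kerF_{*})^\perp$ is assumed to be integrable, $g_{M}([X,Y],W)=0$ for all $X,Y\in\Gamma((kerF_{*})^\perp)$ and $W\in\Gamma(D_2)$. Substituting this into (\ref{e.q:3.8}), multiplying through by $\lambda^{2}$ and moving the terms carrying $ln\lambda$ to one side yields
\begin{align*}
&\lambda^{2}g_{M}(A_{Y}\mathcal{B}X-A_{X}\mathcal{B}Y,JW)-g_{N}(\nabla^{F}_{Y}F_{*}\mathcal{C}X-\nabla^{F}_{X}F_{*}\mathcal{C}Y,F_{*}JW)\\
&\qquad=\lambda^{2}g_{M}\bigl(\mathcal{C}Y(ln\lambda)X-\mathcal{C}X(ln\lambda)Y-2g_{M}(X,\mathcal{C}Y)gradln\lambda,\,JW\bigr).
\end{align*}
Hence (\ref{e.q:3.9}) holds if and only if the right-hand side above vanishes for all admissible $X,Y,W$; call this scalar condition $(\star)$. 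The whole proof then reduces to showing that $(\star)$ is equivalent to $\mathcal{H}(gradln\lambda)=0$; since $\lambda>0$, the latter is the same as $\mathcal{H}(grad\lambda)=0$, i.e. horizontal homotheticity.

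For the ``only if'' part I would assume $F$ horizontally homothetic, so that $gradln\lambda=\lambda^{-1}grad\lambda$ is vertical. As $\mathcal{C}X,\mathcal{C}Y\in\Gamma(\mu)$ and $JW\in\Gamma(JD_2)$ all lie in $(kerF_{*})^\perp$, each of $\mathcal{C}X(ln\lambda)=g_{M}(gradln\lambda,\mathcal{C}X)$, $\mathcal{C}Y(ln\lambda)=g_{M}(gradln\lambda,\mathcal{C}Y)$ and $g_{M}(gradln\lambda,JW)$ vanishes, so $(\star)$ holds trivially and (\ref{e.q:3.9}) follows.

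For the ``if'' part I would feed suitable horizontal vector fields into $(\star)$, using that $\mu$ is $J$-invariant (immediate from $J$ being an isometry together with the orthogonal splitting $(kerF_{*})^\perp=JD_2\oplus\mu$). Taking $X\in\Gamma(\mu)$ and $Y=JX$ one gets $\mathcal{C}X=JX$, $\mathcal{C}Y=-X$, $g_{M}(X,\mathcal{C}Y)=-g_{M}(X,X)$; since $X$ and $JX$ are orthogonal to $JW\in JD_2$, $(\star)$ collapses to $2g_{M}(X,X)\,g_{M}(gradln\lambda,JW)=0$, so $gradln\lambda$ is orthogonal to $JD_2$. Taking instead $X=JW_0$ with $W_0\in\Gamma(D_2)$ and $Y\in\Gamma(\mu)$ one gets $\mathcal{C}X=0$, $\mathcal{C}Y=JY$, $g_{M}(X,\mathcal{C}Y)=g_{M}(W_0,Y)=0$, and $(\star)$ reduces to $(JY)(ln\lambda)\,g_{M}(W_0,W)=0$; choosing $W=W_0$ yields $(JY)(ln\lambda)=0$ for all $Y\in\Gamma(\mu)$, that is, $gradln\lambda$ is orthogonal to $\mu$. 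Combining the two facts and using $(kerF_{*})^\perp=JD_2\oplus\mu$ gives $\mathcal{H}(gradln\lambda)=0$, so $F$ is horizontally homothetic.

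The step I expect to be the main obstacle is the ``if'' direction: the identity $(\star)$ a priori only pairs $gradln\lambda$ against vectors of the form $JW\in JD_2$, so recovering its $\mu$-component requires a little work. The device that makes it go through is the $J$-invariance of $\mu$, which licenses the choices $Y=JX$ with $X\in\Gamma(\mu)$ and, separately, $X\in\Gamma(JD_2)$; after projecting onto $JW$ every term except the one carrying $gradln\lambda$ then drops out. The only other points requiring care are the sign bookkeeping when transcribing (\ref{e.q:3.8}) and the non-degeneracy of the distributions $D_2$ and $\mu$ that is tacitly used when choosing the test fields.
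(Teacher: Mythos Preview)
Your proposal is correct and follows essentially the same route as the paper: both read the statement off formula (\ref{e.q:3.8}), use the integrability hypothesis to kill the left side, and then specialize the horizontal test fields to force the $JD_2$- and $\mu$-components of $gradln\lambda$ to vanish separately. The only cosmetic difference is in the particular specializations chosen---the paper takes $Y=JW$ (giving $\mathcal{C}Y=0$) to obtain the $\mu$-component first and then $Y=\mathcal{C}X$ with $X\in\Gamma(\mu)$ for the $JD_2$-component, whereas you do the latter first and replace the former by the equivalent choice $X=JW_0$; the underlying argument is identical.
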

\begin{proof}
For $X,Y\in\Gamma((kerF_{*})^\perp)$ and $W\in\Gamma(D_2)$, from (\ref{e.q:3.8}) we have
\begin{align*}
g_{M}([X,Y],W)&=g_{M}(A_{Y}\mathcal{B}X-A_{X}\mathcal{B}Y-\mathcal{C}Y(ln\lambda)X+\mathcal{C}X(ln\lambda)Y+2g_{M}(X,\mathcal{C}Y)gradln\lambda,JW)\\
&-\frac{1}{\lambda^2}g_{N}(\nabla^{F}_{Y}F_{*}\mathcal{C}X-\nabla^{F}_{X}F_{*}\mathcal{C}Y,F_{*}JW).
\end{align*}
If $F$ is a horizontally homothetic map then we get (\ref{e.q:3.9}). Conversely, if (\ref{e.q:3.9}) is satisfied then we get
\begin{equation}\label{e.q:3.10}
g_{M}(-g_{M}(gradln\lambda,\mathcal{C}Y)X+g_{M}(gradln\lambda,\mathcal{C}X)Y+2g_{M}(X,\mathcal{C}Y)gradln\lambda,JW)=0.\end{equation}
Now, taking $Y=JW$ for $W\in\Gamma(D_2)$ in (\ref{e.q:3.10}), we have $g_{M}(gradln\lambda,\mathcal{C}X)g_{M}(JW,JW)=0.$ Thus, $\lambda$ is a constant on $\Gamma(\mu).$ On the other hand, taking $Y=\mathcal{C}X$ for $X\in\Gamma(\mu)$ in (\ref{e.q:3.10}) we obtain
\begin{equation*}
2g_{M}(X,\mathcal{C}^{2}X)g_{M}(gradln\lambda,JW)=2g_{M}(X,X)g_{M}(gradln\lambda,JW)=0.
\end{equation*}
From above equation, $\lambda$ is a constant on $\Gamma(JD_2).$ This completes the proof.
\end{proof}
As conformal version of anti-holomorphic semi-invariant submersion (\cite{T}), a conformal semi-invariant submersion is called a conformal anti-holomorphic semi-invariant submersion if $J(D_2)=(kerF_{*})^\perp$ . For a conformal anti-holomorphic semi-invariant submersion, from Theorem \ref{teo1} we have the following.
\begin{corollary}
Let $F$ be a conformal anti-holomorphic semi-invariant submersion from a K\"{a}hler manifold $(M,g_{M},J)$ to a Riemannian manifold $(N,g_{N})$. Then the following assertions are equivalent to each other;
\begin{enumerate}
\item [(i)] $(kerF_{*})^\perp$ is integrable
\item [(ii)] $g_{N}(F_{*}JW_1,(\nabla F_{*})(V,JW_2))=g_{N}(F_{*}JW_2,(\nabla F_{*})(V,JW_1))$
\end{enumerate}
for $W_1,W_2\in\Gamma(D_2)$ and $V\in\Gamma(kerF_{*})$.
\end{corollary}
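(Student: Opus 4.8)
The plan is to exploit the defining feature of the anti-holomorphic case, namely $J(D_2)=(kerF_{*})^\perp$, which lets one parametrize the horizontal distribution by $J$-images of sections of $D_2$. Concretely, if $\{W_1,\dots,W_k\}$ is a local frame for $D_2$, then $\{JW_1,\dots,JW_k\}$ is a local frame for $(kerF_{*})^\perp$, and since a bracket $[fJW_i,gJW_j]$ differs from $fg[JW_i,JW_j]$ only by terms lying in $(kerF_{*})^\perp$, the horizontal distribution is integrable if and only if
\[
g_{M}([JW_1,JW_2],V)=0\qquad\text{for all }W_1,W_2\in\Gamma(D_2),\ V\in\Gamma(kerF_{*}).
\]
So the whole corollary reduces to identifying $g_{M}([JW_1,JW_2],V)$, up to the positive factor $\lambda^{2}$, with the difference of the two terms appearing in (ii).

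To do this I would first expand the bracket as $g_{M}([JW_1,JW_2],V)=g_{M}(\nabla^{M}_{JW_1}JW_2,V)-g_{M}(\nabla^{M}_{JW_2}JW_1,V)$. Since $JW_1$ is horizontal and $V$ is vertical, $g_{M}(JW_1,V)=0$, so metric compatibility of $\nabla^{M}$ gives $g_{M}(\nabla^{M}_{JW_2}JW_1,V)=-g_{M}(JW_1,\nabla^{M}_{JW_2}V)$, and symmetrically for the other term. On the other hand, by Lemma \ref{lem1}(iii) and the symmetry of the second fundamental form, $(\nabla F_{*})(V,JW_2)=-F_{*}(\nabla^{M}_{JW_2}V)$; using that $F_{*}$ annihilates vertical vectors together with the horizontal conformality $g_{N}(F_{*}Z_1,F_{*}Z_2)=\lambda^{2}g_{M}(Z_1,Z_2)$ for horizontal $Z_1,Z_2$, one obtains
\[
g_{N}(F_{*}JW_1,(\nabla F_{*})(V,JW_2))=-\lambda^{2}g_{M}(JW_1,\nabla^{M}_{JW_2}V)=\lambda^{2}g_{M}(\nabla^{M}_{JW_2}JW_1,V),
\]
and symmetrically for $g_{N}(F_{*}JW_2,(\nabla F_{*})(V,JW_1))$. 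Subtracting, the right-hand sides combine to $-\lambda^{2}g_{M}([JW_1,JW_2],V)$, and since $\lambda>0$ the equality in (ii) holds for all admissible $W_1,W_2,V$ precisely when the vertical component of $[JW_1,JW_2]$ vanishes identically, i.e. when $(kerF_{*})^\perp$ is integrable.

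The same conclusion can also be read off Theorem \ref{teo1}: in the anti-holomorphic case $\mu=\{0\}$, so $\mathcal{C}X=0$ and $\mathcal{B}X=JX$, hence $\omega\mathcal{B}X=J\mathcal{B}X=-X$; substituting these into the two conditions of Theorem \ref{teo1} collapses them to the single requirement that $[X,Y]$ be horizontal for all horizontal $X,Y$, which is then rephrased via Lemma \ref{lem1}(iii) and conformality exactly as above. I expect the only genuinely delicate point to be bookkeeping: verifying that the horizontal correction terms drop out when passing from arbitrary horizontal fields to the frame $\{JW_i\}$, and tracking the signs coming from $J^{2}=-I$, from the metric-compatibility step, and from the symmetry of $\nabla F_{*}$; the underlying computation itself is short.
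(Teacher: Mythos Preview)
Your argument is correct. The paper does not spell out a proof of this corollary; it simply states that it follows from Theorem \ref{teo1} by specializing to the anti-holomorphic case $\mu=\{0\}$, which is exactly the alternative route you sketch in your final paragraph.

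Your primary direct computation is, however, a genuinely leaner argument than the paper's route. You bypass the decomposition $JX=\mathcal{B}X+\mathcal{C}X$ and never invoke the K\"ahler identity $\nabla^{M}J=0$: the only ingredients are torsion-freeness of $\nabla^{M}$, metric compatibility, horizontal conformality, and Lemma \ref{lem1}(iii) together with the symmetry of $\nabla F_{*}$. In particular your proof shows that the equivalence $(i)\Leftrightarrow(ii)$ already holds for a conformal anti-holomorphic semi-invariant submersion from an arbitrary almost Hermitian manifold, whereas the paper's derivation via Theorem \ref{teo1} passes through the identities (\ref{e.q:3.7})--(\ref{e.q:3.8}), which rely on (\ref{e.q:2.2}). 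What the Theorem \ref{teo1} route buys is uniformity: the corollary drops out as a one-line specialization once the general integrability criterion is in hand. What your direct route buys is a self-contained, hypothesis-minimal proof with no bookkeeping of the $D_{1}$/$D_{2}$ split on the vertical side. Your remark about the frame reduction (that $[fJW_i,gJW_j]-fg[JW_i,JW_j]$ is horizontal) and the sign tracking are both handled correctly.
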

For the geometry of leaves of the horizontal distribution, we have the following theorem.
\begin{theorem}\label{teo2}
Let $F$ be a conformal semi-invariant submersion from a K\"{a}hler manifold $(M,g_{M},J)$ to a Riemannian manifold $(N,g_{N})$. Then the distribution $(kerF_{*})^\perp$ defines a totally geodesic foliation on $M$ if and only if
\begin{equation*}
A_{X}\mathcal{C}Y+\mathcal{V}\nabla^{^M}_{X}\mathcal{B}Y\in\Gamma(D_2)
\end{equation*}
and
\begin{equation*}
\lambda^{2}\{g_{M}(A_{X}\mathcal{B}Y-\mathcal{C}Y(ln\lambda)X+g_{M}(X,\mathcal{C}Y)gradln\lambda,JW)\}=g_{N}(\nabla^{F}_{X}F_{*}JW,F_{*}\mathcal{C}Y)
\end{equation*}
for $X,Y\in\Gamma((kerF_{*})^\perp)$, $V\in\Gamma(D_1)$ and $W\in\Gamma(D_2)$.
\end{theorem}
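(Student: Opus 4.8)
The plan is to reduce the statement to a pair of orthogonality conditions and evaluate each one with the K\"{a}hler identity together with the O'Neill and conformal-submersion formulas recalled in Section 2. Since an integrable distribution defines a totally geodesic foliation precisely when it is closed under $\nabla^{^M}$, and since $kerF_{*}=D_{1}\oplus D_{2}$, the distribution $(kerF_{*})^\perp$ defines a totally geodesic foliation on $M$ if and only if $g_{M}(\nabla^{^M}_{X}Y,V)=0$ for all $V\in\Gamma(D_{1})$ and $g_{M}(\nabla^{^M}_{X}Y,W)=0$ for all $W\in\Gamma(D_{2})$, where $X,Y\in\Gamma((kerF_{*})^\perp)$. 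I will handle the two conditions separately.

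For $V\in\Gamma(D_{1})$: using (\ref{e.q:2.1}) and (\ref{e.q:2.2}) I write $g_{M}(\nabla^{^M}_{X}Y,V)=g_{M}(\nabla^{^M}_{X}JY,JV)$, decompose $JY=\mathcal{B}Y+\mathcal{C}Y$ by (\ref{e.q:3.4}) with $\mathcal{B}Y\in\Gamma(D_{2})\subseteq\Gamma(kerF_{*})$ and $\mathcal{C}Y\in\Gamma(\mu)\subseteq\Gamma((kerF_{*})^\perp)$, and note that $JV\in\Gamma(D_{1})$ is vertical. Applying (\ref{nxv}) to $\nabla^{^M}_{X}\mathcal{B}Y$ and (\ref{nxy}) to $\nabla^{^M}_{X}\mathcal{C}Y$ and discarding the components orthogonal to the vertical space, I obtain $g_{M}(\nabla^{^M}_{X}Y,V)=g_{M}(A_{X}\mathcal{C}Y+\mathcal{V}\nabla^{^M}_{X}\mathcal{B}Y,JV)$. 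Because $J(D_{1})=D_{1}$ and $A_{X}\mathcal{C}Y+\mathcal{V}\nabla^{^M}_{X}\mathcal{B}Y$ is vertical, this vanishes for every $V\in\Gamma(D_{1})$ exactly when $A_{X}\mathcal{C}Y+\mathcal{V}\nabla^{^M}_{X}\mathcal{B}Y\in\Gamma(D_{2})$, which is the first stated condition.

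For $W\in\Gamma(D_{2})$: I again write $g_{M}(\nabla^{^M}_{X}Y,W)=g_{M}(\nabla^{^M}_{X}JY,JW)$ and split $JY=\mathcal{B}Y+\mathcal{C}Y$, this time with $JW\in\Gamma(JD_{2})\subseteq\Gamma((kerF_{*})^\perp)$ horizontal. By (\ref{nxv}) the first piece reduces to $g_{M}(A_{X}\mathcal{B}Y,JW)$. For the piece $g_{M}(\nabla^{^M}_{X}\mathcal{C}Y,JW)$ both arguments are horizontal, so I pass to $N$ via the conformality relation $g_{N}(F_{*}E,F_{*}G)=\lambda^{2}g_{M}(E,G)$ for horizontal $E,G$: from (\ref{nfixy}), $F_{*}(\nabla^{^M}_{X}\mathcal{C}Y)=\nabla^{F}_{X}F_{*}\mathcal{C}Y-(\nabla F_{*})(X,\mathcal{C}Y)$, into which I substitute Lemma \ref{lem1}(i). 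Since $\mu$ and $JD_{2}$ are orthogonal, $g_{M}(\mathcal{C}Y,JW)=0$ and hence $g_{N}(F_{*}\mathcal{C}Y,F_{*}JW)=0$, so the term containing $X(ln\lambda)F_{*}\mathcal{C}Y$ drops and one is left with the contributions of $\nabla^{F}_{X}F_{*}\mathcal{C}Y$, of $\mathcal{C}Y(ln\lambda)$, and of $g_{M}(X,\mathcal{C}Y)\,gradln\lambda$. Finally, since $g_{N}(F_{*}\mathcal{C}Y,F_{*}JW)=0$, metric compatibility of the pullback connection $\nabla^{F}$ yields $-g_{N}(\nabla^{F}_{X}F_{*}\mathcal{C}Y,F_{*}JW)=g_{N}(\nabla^{F}_{X}F_{*}JW,F_{*}\mathcal{C}Y)$, and after clearing $\lambda^{2}$ the vanishing of $g_{M}(\nabla^{^M}_{X}Y,W)$ becomes exactly the second stated condition. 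Combining the two cases completes the proof.

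All of the steps are routine manipulations; the only delicate point is the second case, where one must carefully track the three conformal-factor terms coming from Lemma \ref{lem1}(i) and then use $g_{N}(F_{*}\mathcal{C}Y,F_{*}JW)=0$ both to remove a redundant term and to rewrite the $\nabla^{F}$-derivative in the asymmetric form $g_{N}(\nabla^{F}_{X}F_{*}JW,F_{*}\mathcal{C}Y)$ appearing in the statement. I do not foresee any essential difficulty beyond this bookkeeping.
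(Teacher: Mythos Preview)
Your proposal is correct and follows essentially the same route as the paper: reduce to the two orthogonality conditions $g_{M}(\nabla^{^M}_{X}Y,V)=0$ and $g_{M}(\nabla^{^M}_{X}Y,W)=0$, apply the K\"{a}hler identity to rewrite each as $g_{M}(\nabla^{^M}_{X}JY,JV)$ and $g_{M}(\nabla^{^M}_{X}JY,JW)$, split $JY=\mathcal{B}Y+\mathcal{C}Y$, and use the O'Neill decomposition together with Lemma~\ref{lem1}(i). The only cosmetic difference is that in the $D_{2}$ case the paper first integrates by parts on $M$ (writing $-g_{M}(\mathcal{C}Y,\nabla^{^M}_{X}JW)$) and then applies Lemma~\ref{lem1}(i) to $(\nabla F_{*})(X,JW)$, whereas you apply Lemma~\ref{lem1}(i) to $(\nabla F_{*})(X,\mathcal{C}Y)$ and afterwards swap via metric compatibility of $\nabla^{F}$; the two computations are line-by-line equivalent.
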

\begin{proof}
           The distribution $(kerF_{*})^\perp$ defines a totally geodesic foliation on $M$ if and only if $g_{M}(\nabla^{^M}_{X}Y,V)=0$ and $g_{M}(\nabla^{^M}_{X}Y,W)=0$ for $X,Y\in\Gamma((kerF_{*})^\perp)$, $V\in\Gamma(D_1)$ and $W\in\Gamma(D_2)$.
Then by using (\ref{e.q:2.1}), (\ref{e.q:2.2}),(\ref{nxv}), (\ref{nxy}) and (\ref{e.q:3.4}), we get
\begin{align}\label{e.q:3.11}
g_{M}(\nabla^{^M}_{X}Y,V)&=-g_{M}(\phi(A_{X}\mathcal{C}Y+\mathcal{V}\nabla^{^M}_{X}\mathcal{B}Y),V).
\end{align}
On the other hand, from (\ref{e.q:2.1}), (\ref{e.q:2.2}) and (\ref{e.q:3.4}) we get
\begin{align*}
g_{M}(\nabla^{^M}_{X}Y,W)&=-g_{M}(\mathcal{B}Y,\nabla^{^M}_{X}JW)-g_{M}(\mathcal{C}Y,\nabla^{^M}_{X}JW).
\end{align*}
Since $F$ is a conformal submersion, using (\ref{nxy}), (\ref{nfixy}) and Lemma \ref{lem1} we arrive at
\begin{align*}
g_{M}(\nabla^{^M}_{X}Y,W)&=-g_{M}(\mathcal{B}Y,A_{X}JW)++\frac{1}{\lambda^2}g_{M}(gradln\lambda,JW)g_{N}(F_{*}X,F_{*}\mathcal{C}Y)\\
&-\frac{1}{\lambda^2}g_{M}(X,JW)g_{N}(F_{*}(gradln\lambda),F_{*}\mathcal{C}Y)
-\frac{1}{\lambda^2}g_{N}(\nabla^{F}_{X}F_{*}JW,F_{*}\mathcal{C}Y).
\end{align*}
Conformal semi-invariant  $F$ implies that
\begin{align}\label{e.q:3.12}
g_{M}(\nabla^{^M}_{X}Y,V)&=g_{M}(A_{X}\mathcal{B}Y-\mathcal{C}Y(ln\lambda)X+g_{M}(X,\mathcal{C}Y)gradln\lambda,JW)\notag\\
&-\frac{1}{\lambda^2}g_{N}(\nabla^{F}_{X}F_{*}JW,F_{*}\mathcal{C}Y).
\end{align}
Thus proof follows from (\ref{e.q:3.11}) and (\ref{e.q:3.12}).
\end{proof}
Next we give new conditions for conformal semi-invariant submersions to be horizontally homot-hetic map. But we first give the following definition.
\begin{definition}
Let $F$ be a conformal semi-invariant submersion from a K\"{a}hler manifold $(M,g_{M},J)$ to a Riemannian manifold $(N,g_{N})$. Then we say that $D_2$ is parallel along $(kerF_{*})^\perp$ if $\nabla^{^M}_{X}W\in\Gamma(D_2)$ for $X\in\Gamma((kerF_{*})^\perp)$ and $W\in\Gamma(D_2).$
\end{definition}
\begin{corollary}
Let $F:(M,g_{M},J)\longrightarrow (N,g_{N})$ be a conformal semi-invariant submersion such that $D_2$ is parallel along $(kerF_{*})^\perp$. Then $F$ is a horizontally homothetic map if and only if
\begin{equation}\label{e.q:3.13}
\lambda^{2}g_{M}(A_{X}\mathcal{B}Y,JW)=g_{N}(\nabla^{F}_{X}F_{*}JW,F_{*}\mathcal{C}Y)
\end{equation}
for $X,Y\in\Gamma((kerF_{*})^\perp)$ and $W\in\Gamma(D_2)$.
\end{corollary}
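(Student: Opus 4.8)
The plan is to extract the corollary directly from the identity established inside the proof of Theorem~\ref{teo2}. Note that equation~(\ref{e.q:3.12}) was obtained there for a \emph{general} conformal semi-invariant submersion, with no totally geodesic assumption actually used; written out, it says that for $X,Y\in\Gamma((kerF_{*})^\perp)$ and $W\in\Gamma(D_{2})$,
\begin{align*}
g_{M}(\nabla^{^M}_{X}Y,W)&=g_{M}(A_{X}\mathcal{B}Y,JW)-\mathcal{C}Y(ln\lambda)\,g_{M}(X,JW)\\
&\quad+g_{M}(X,\mathcal{C}Y)\,g_{M}(gradln\lambda,JW)-\frac{1}{\lambda^{2}}g_{N}(\nabla^{F}_{X}F_{*}JW,F_{*}\mathcal{C}Y).
\end{align*}
First I would invoke the hypothesis that $D_{2}$ is parallel along $(kerF_{*})^\perp$: then $\nabla^{^M}_{X}W\in\Gamma(D_{2})\subseteq\Gamma(kerF_{*})$, and since $Y$ is horizontal, $g_{M}(\nabla^{^M}_{X}Y,W)=X(g_{M}(Y,W))-g_{M}(Y,\nabla^{^M}_{X}W)=0$. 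Hence the right-hand side above vanishes identically, and the corollary becomes a question about when the two $ln\lambda$-derivative terms drop out.

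For the ``only if'' part, assume $F$ is horizontally homothetic, i.e. $\mathcal{H}(grad\lambda)=0$. Then $gradln\lambda=\lambda^{-1}grad\lambda$ is vertical, whereas $\mathcal{C}Y\in\Gamma(\mu)$ and $JW\in\Gamma(JD_{2})$ are horizontal; therefore $\mathcal{C}Y(ln\lambda)=g_{M}(gradln\lambda,\mathcal{C}Y)=0$ and $g_{M}(gradln\lambda,JW)=0$, and the vanishing identity collapses to exactly (\ref{e.q:3.13}).

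For the converse, suppose (\ref{e.q:3.13}) holds. Substituting it into the vanishing identity cancels the $A_{X}\mathcal{B}Y$ term against the $\nabla^{F}$ term, leaving
\[
-\mathcal{C}Y(ln\lambda)\,g_{M}(X,JW)+g_{M}(X,\mathcal{C}Y)\,g_{M}(gradln\lambda,JW)=0
\]
for all $X,Y\in\Gamma((kerF_{*})^\perp)$ and $W\in\Gamma(D_{2})$. I would then specialise twice. Putting $X=JW$, and using $g_{M}(JW,\mathcal{C}Y)=0$ (since $JD_{2}$ and $\mu$ are orthogonal), gives $\mathcal{C}Y(ln\lambda)\,g_{M}(W,W)=0$, so $g_{M}(gradln\lambda,\mathcal{C}Y)=0$ for every $Y$; since $\mathcal{C}^{2}V=-V$ for all $V\in\Gamma(\mu)$ --- an immediate consequence of (\ref{e.q:2.1}), the decomposition (\ref{e.q:3.4}) and $JD_{2}\subseteq(kerF_{*})^\perp$ --- the operator $\mathcal{C}$ maps $\mu$ onto $\mu$, so $gradln\lambda\perp\mu$. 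With this in hand the leftover relation is $g_{M}(X,\mathcal{C}Y)\,g_{M}(gradln\lambda,JW)=0$; choosing $X=\mathcal{C}Y$ with $\mathcal{C}Y\neq0$ forces $g_{M}(gradln\lambda,JW)=0$ for all $W\in\Gamma(D_{2})$, i.e. $gradln\lambda\perp JD_{2}$. As $(kerF_{*})^\perp=JD_{2}\oplus\mu$, we conclude $\mathcal{H}(gradln\lambda)=0$, hence $\mathcal{H}(grad\lambda)=0$ because $\lambda>0$.

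The ``only if'' direction is routine bookkeeping. The step I expect to be the real obstacle is the converse, where the two $ln\lambda$-derivative terms have to be separated and one has to conclude that $grad\lambda$ is purely vertical; the delicate point is selecting test vector fields whose $J$-images exhaust both summands $JD_{2}$ and $\mu$ of the horizontal bundle, and it is here that the bijectivity of $\mathcal{C}$ on $\mu$ is needed.
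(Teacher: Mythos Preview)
Your proof is correct and follows essentially the same route as the paper: both start from the identity~(\ref{e.q:3.12}), use the parallel hypothesis on $D_{2}$ to kill the left-hand side, and then specialise $X=JW$ and $X=\mathcal{C}Y$ to separate the two $ln\lambda$ terms. Your write-up is in fact slightly more careful than the paper's, since you make explicit why $g_{M}(\nabla^{^M}_{X}Y,W)=0$ under the hypothesis and why $\mathcal{C}$ surjects onto $\mu$, points the paper leaves implicit.
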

\begin{proof}
 (\ref{e.q:3.13}) implies that
\begin{equation}\label{e.q:3.14}
-g_{M}(gradln\lambda,\mathcal{C}Y)g_{M}(X,JW)+g_{M}(X,\mathcal{C}Y)g_{M}(gradln\lambda,JW)=0.
\end{equation}
Now, taking $X=JW$ for $W\in\Gamma(D_{2})$ in (\ref{e.q:3.14}), we get
$$g_{M}(gradln\lambda,\mathcal{C}Y)g_{M}(JW,JW)=0.$$ Thus, $\lambda$ is a constant on $\Gamma(\mu)$. On the other hand, taking $X=CY$ for $Y\in\Gamma(\mu)$ in (\ref{e.q:3.14}) we derive $$g_{M}(\mathcal{C}Y,\mathcal{C}Y)g_{M}(gradln\lambda,JW)=0.$$ From above equation, $\lambda$ is a constant on $\Gamma(JD_{2})$. The converse is clear from (\ref{e.q:3.12}).
\end{proof}
In particular, if $F$ is a conformal anti-holomorphic semi-invariant submersion, then we have the following.
\begin{corollary}\label{cor1}
Let $F$ be a conformal anti-holomorphic semi-invariant submersion from a K\"{a}hler manifold $(M,g_{M},J)$ to a Riemannian manifold $(N,g_{N})$. Then the following assertions are equivalent to each other;
\begin{enumerate}
\item [(i)] $(kerF_{*})^\perp$ defines a totally geodesic foliation on $M$.
\item [(ii)] $(\nabla F_{*})(V,JW_1)\in\Gamma(F_{*}(\mu))$
\end{enumerate}
for $W_1,W_2\in\Gamma(D_2)$ and $V\in\Gamma(kerF_{*})$.
\end{corollary}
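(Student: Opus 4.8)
The plan is to reduce both (i) and (ii) to the single tensorial condition that $A_{X}Y=0$ for all $X,Y\in\Gamma((kerF_{*})^\perp)$. First note that the anti-holomorphic hypothesis $J(D_{2})=(kerF_{*})^\perp$ also gives $J((kerF_{*})^\perp)=D_{2}$, so in the decomposition (\ref{e.q:3.4}) we have $\mathcal{B}Y=JY\in\Gamma(D_{2})$ and $\mathcal{C}Y=0$ for every $Y\in\Gamma((kerF_{*})^\perp)$, while the distribution $\mu$ complementary to $JD_{2}$ in $(kerF_{*})^\perp$ is $\{0\}$. Hence $F_{*}(\mu)=\{0\}$, and assertion (ii) is simply the statement that $(\nabla F_{*})(V,JW_{1})=0$ for all $V\in\Gamma(kerF_{*})$ and $W_{1}\in\Gamma(D_{2})$.

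For assertion (i): the distribution $(kerF_{*})^\perp$ defines a totally geodesic foliation precisely when $\nabla^{M}_{X}Y\in\Gamma((kerF_{*})^\perp)$ for all horizontal $X,Y$, that is, by (\ref{nxy}), when $A_{X}Y=0$ for all horizontal $X,Y$. The same thing drops out of Theorem~\ref{teo2}: with $\mathcal{B}Y=JY$ and $\mathcal{C}Y=0$, its first condition becomes $\mathcal{V}\nabla^{M}_{X}JY\in\Gamma(D_{2})$ and its second becomes $g_{M}(A_{X}JY,JW)=0$ for all $W\in\Gamma(D_{2})$; since $JW$ exhausts $(kerF_{*})^\perp$ and $A_{X}JY$ is horizontal by (\ref{nxv}), the latter means $A_{X}JY=0$. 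A short computation with (\ref{e.q:2.2}) and $JD_{1}=D_{1}$ gives $g_{M}(\mathcal{V}\nabla^{M}_{X}JY,V_{1})=-g_{M}(A_{X}Y,JV_{1})$ for $V_{1}\in\Gamma(D_{1})$, so the first condition is equivalent to $A_{X}Y\in\Gamma(D_{2})$; and by skew-symmetry of $A_{X}$ the relation $A_{X}JY=0$ is equivalent to $A_{X}Y\in\Gamma(D_{1})$. The conjunction is $A_{X}Y\in\Gamma(D_{1})\cap\Gamma(D_{2})=\{0\}$, as claimed.

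For assertion (ii): by the symmetry of the second fundamental form and Lemma~\ref{lem1}(iii), $(\nabla F_{*})(V,JW_{1})=(\nabla F_{*})(JW_{1},V)=-F_{*}(A_{JW_{1}}V)$. Since $F$ is horizontally conformal, $F_{*}$ is injective on $(kerF_{*})^\perp$, and $A_{JW_{1}}V$ is horizontal; hence (ii) is equivalent to $A_{JW_{1}}V=0$ for all $V\in\Gamma(kerF_{*})$ and $W_{1}\in\Gamma(D_{2})$, and therefore, as $JD_{2}=(kerF_{*})^\perp$, to $A_{Z}V=0$ for every horizontal $Z$ and vertical $V$. By the skew-symmetry $-g_{M}(A_{Z}E,G)=g_{M}(E,A_{Z}G)$, this last condition is in turn equivalent to $A_{Z}Y=0$ for all horizontal $Z,Y$, which is precisely the condition extracted for (i). Hence (i) and (ii) are equivalent.

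The only genuine obstacle I foresee is the bookkeeping of projections: keeping track of the horizontal/vertical splitting, and inside the vertical space the $D_{1}/D_{2}$ splitting, while rewriting both O'Neill-type conditions of Theorem~\ref{teo2} and the differential $(\nabla F_{*})(V,JW_{1})$ purely in terms of the tensor $A$. Once everything is expressed through $A$, its skew-symmetry together with the relation $J(D_{2})=(kerF_{*})^\perp$ forces the two descriptions to coincide.
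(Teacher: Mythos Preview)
Your proof is correct and follows the route the paper intends: the corollary is stated immediately after Theorem~\ref{teo2} with no separate argument, so the implicit proof is precisely to specialize Theorem~\ref{teo2} to the case $\mu=\{0\}$, $\mathcal{C}=0$, $\mathcal{B}=J$, and then recast the resulting conditions via Lemma~\ref{lem1}(iii). Your intermediate reduction of both (i) and (ii) to the single condition $A_{X}Y=0$ for all horizontal $X,Y$ is a clean way to organize this; one small remark is that the pointwise equivalence ``$A_{X}JY=0 \Longleftrightarrow A_{X}Y\in\Gamma(D_{1})$'' is most directly seen from the K\"{a}hler identity $A_{X}JY=\mathcal{H}J\nabla^{M}_{X}Y=\omega A_{X}Y$ rather than from skew-symmetry alone (skew-symmetry gives it only after quantifying over $Y$), but either route yields the same conclusion.
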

In the sequel we are going to investigate the geometry of leaves of the distribution $kerF_{*}$.
\begin{theorem}\label{teo3}
Let $F$ be a conformal semi-invariant submersion from a K\"{a}hler manifold $(M,g_{M},J)$ to a Riemannian manifold $(N,g_{N})$. Then the distribution $(kerF_{*})$ defines a totally geodesic foliation on $M$ if and only if
\begin{equation*}
\lambda^{2}\{g_{M}(\mathcal{C}T_{U}\phi V+A_{\omega V}\phi U+g_{M}(\omega V,\omega U)gradln\lambda,X)\}
=g_{N}(\nabla^{F}_{\omega V}F_{*}X,F_{*}\omega U)
\end{equation*}
and
\begin{equation*}
T_{V}\omega U+\hat{\nabla}_{V}\phi U\in\Gamma(D_1)
\end{equation*}
for $U,V\in\Gamma(kerF_{*})$, $X\in\Gamma(\mu)$ and $W\in\Gamma(D_2)$.
\end{theorem}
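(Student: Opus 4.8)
The plan is to reduce the statement to the familiar criterion that $\ker F_{*}$ defines a totally geodesic foliation on $M$ if and only if $g_{M}(\nabla^{M}_{U}V,X)=0$ for all $U,V\in\Gamma(\ker F_{*})$ and all horizontal $X$ (equivalently $T_{U}V=0$). Since $(\ker F_{*})^{\perp}=JD_{2}\oplus\mu$ and $\mu$ is $J$-invariant --- indeed $JD_{1}=D_{1}$, $JD_{2}\subseteq(\ker F_{*})^{\perp}$ and $J(JD_{2})=D_{2}$ force $J\mu=\mu$ --- it suffices to test this against $X\in\Gamma(\mu)$ and against $X=JW$ with $W\in\Gamma(D_{2})$; the first family will yield the displayed identity and the second the membership condition.

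For $X=JW$, $W\in\Gamma(D_{2})$: using (\ref{e.q:2.1}), (\ref{e.q:2.2}) I would write $g_{M}(\nabla^{M}_{U}V,JW)=-g_{M}(\nabla^{M}_{U}JV,W)$, substitute $JV=\phi V+\omega V$ from (\ref{e.q:3.3}), and expand $\nabla^{M}_{U}\phi V$ and $\nabla^{M}_{U}\omega V$ via (\ref{nvw}) and (\ref{nvx}). As $W$ is vertical, only the vertical parts survive and one gets $g_{M}(\nabla^{M}_{U}V,JW)=-g_{M}(\hat{\nabla}_{U}\phi V+T_{U}\omega V,W)$. The vector $\hat{\nabla}_{U}\phi V+T_{U}\omega V$ is vertical, so its annihilating every $W\in\Gamma(D_{2})$ is exactly $\hat{\nabla}_{U}\phi V+T_{U}\omega V\in\Gamma(D_{1})$; after the harmless relabelling $U\leftrightarrow V$ this is the stated condition $T_{V}\omega U+\hat{\nabla}_{V}\phi U\in\Gamma(D_{1})$.

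For $X\in\Gamma(\mu)$: by (\ref{e.q:2.2}), $g_{M}(\nabla^{M}_{U}V,X)=g_{M}(\nabla^{M}_{U}JV,JX)$ with $JX\in\Gamma(\mu)$, and metric compatibility rewrites this as $-g_{M}(\phi V,\nabla^{M}_{U}JX)-g_{M}(\omega V,\nabla^{M}_{U}JX)$. The first term, via (\ref{nvx}) and the skew-symmetry of $T$, equals $g_{M}(T_{U}\phi V,JX)$, and (\ref{e.q:2.1}), (\ref{e.q:3.4}) turn it into $-g_{M}(\mathcal{C}T_{U}\phi V,X)$ (the $\mathcal{B}$-part dies against $\mu$). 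For the second --- delicate --- term I would trade the derivative in the vertical direction $U$ for one in the horizontal direction $\omega V$: assuming, as we may since the identity to be proved is tensorial, that $\omega V$ is basic, the relation $\mathcal{H}\nabla^{M}_{U}\omega V=A_{\omega V}U=\mathcal{H}\nabla^{M}_{\omega V}U$ recorded in Section 2 gives $-g_{M}(\omega V,\nabla^{M}_{U}JX)=g_{M}(\nabla^{M}_{\omega V}U,JX)$; writing $U=-J\phi U-J\omega U$ from (\ref{e.q:3.3}) and using (\ref{e.q:2.2}), this becomes $-g_{M}(\nabla^{M}_{\omega V}\phi U,X)-g_{M}(\nabla^{M}_{\omega V}\omega U,X)$. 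The first of these is $-g_{M}(A_{\omega V}\phi U,X)$ by (\ref{nxv}); the second, being horizontal--horizontal, I would push down by $F$: by (\ref{nfixy}), Lemma \ref{lem1}(i) (with $g_{M}(\omega V,X)=0$ since $\omega V\in\Gamma(JD_{2})$, $X\in\Gamma(\mu)$) and the conformality relation $g_{N}(F_{*}Z_{1},F_{*}Z_{2})=\lambda^{2}g_{M}(Z_{1},Z_{2})$ on horizontal fields, it becomes $\frac{1}{\lambda^{2}}g_{N}(\nabla^{F}_{\omega V}F_{*}X,F_{*}\omega U)-g_{M}(\omega V,\omega U)X(ln\lambda)$. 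Adding the two contributions gives $g_{M}(\nabla^{M}_{U}V,X)=-g_{M}(\mathcal{C}T_{U}\phi V+A_{\omega V}\phi U+g_{M}(\omega V,\omega U)gradln\lambda,X)+\frac{1}{\lambda^{2}}g_{N}(\nabla^{F}_{\omega V}F_{*}X,F_{*}\omega U)$; equating to zero and multiplying by $-\lambda^{2}$ produces the first displayed identity.

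The main obstacle is precisely this $\omega V$-term: one must convert a covariant derivative taken in a vertical direction into one taken in the horizontal direction $\omega V$ so that Lemma \ref{lem1} and the dilation relation become applicable, while avoiding the identity $A_{X}Y=-A_{Y}X$ for horizontal fields, which is false for genuinely conformal (non-Riemannian) submersions. Keeping the $\mathcal{B}/\mathcal{C}$- and $\phi/\omega$-projections sorted and bookkeeping the $ln\lambda$-terms correctly through these manipulations is where sign errors are most likely to creep in.
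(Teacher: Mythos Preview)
Your proof is correct and follows essentially the same route as the paper's: you test $g_{M}(\nabla^{M}_{U}V,\cdot)$ against $\Gamma(\mu)$ and $\Gamma(JD_{2})$ separately, use the K\"{a}hler condition together with (\ref{e.q:3.3}) to bring in $\phi$ and $\omega$, swap the $U$-derivative for an $\omega V$-derivative via the basic-field relation $\mathcal{H}\nabla^{M}_{U}\omega V=A_{\omega V}U$, and then push the horizontal--horizontal term down with Lemma \ref{lem1}(i) and conformality to produce the $\nabla^{F}_{\omega V}F_{*}X$ and $grad\ln\lambda$ contributions, arriving at exactly (\ref{e.q:3.15}) and (\ref{e.q:3.16}). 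The only cosmetic difference is that the paper records the $JD_{2}$-condition as $-g_{M}(\omega(\hat{\nabla}_{V}\phi U+T_{V}\omega U),JW)$ using the identity $g_{M}(Z,W)=g_{M}(\omega Z,JW)$ for vertical $Z$ and $W\in\Gamma(D_{2})$, which is equivalent to your formulation after the $U\leftrightarrow V$ relabelling you note.
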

\begin{proof}
The distribution $(kerF_{*})$ defines a totally geodesic foliation on $M$ if and only if $g_{M}(\nabla^{^M}_{U}V,X)=0$ and $g_{M}(\nabla^{^M}_{U}V,JW)=0$ for $U,V\in\Gamma(kerF_{*})$, $X\in\Gamma(\mu)$ and $W\in\Gamma(D_2)$. Using (\ref{e.q:2.1}), (\ref{e.q:2.2}) and (\ref{e.q:3.3}), we have
$$
g_{M}(\nabla^{^M}_{U}V,X)=g_{M}(\nabla^{^M}_{U}\phi V,JX)+g_{M}(\phi U,\nabla^{^M}_{\omega V}X)+g_{M}(\omega U,\nabla^{^M}_{\omega V}X).
$$
Since $F$ is a conformal submersion, from (\ref{nvw}), (\ref{nxy}), (\ref{nfixy}) and Lemma \ref{lem1} we arrive at
\begin{align*}
g_{M}(\nabla^{^M}_{U}V,X)&=g_{M}(T_{U}\phi V,JX)+g_{M}(\phi U,A_{\omega V}X)-\frac{1}{\lambda^2}g_{M}(gradln\lambda,X)g_{N}(F_{*}\omega V,F_{*}\omega U)\\
& +\frac{1}{\lambda^2}g_{N}(\nabla^{F}_{\omega V}F_{*}X,F_{*}\omega U).
\end{align*}
Hence, we obtain
\begin{align}\label{e.q:3.15}
g_{M}(\nabla^{^M}_{U}V,X)&=g_{M}(-\mathcal{C}T_{U}\phi V-A_{\omega V}\phi U-g_{M}(\omega V,\omega U)gradln\lambda,X)\notag\\
&+\frac{1}{\lambda^2}g_{N}(\nabla^{F}_{\omega V}F_{*}X,F_{*}\omega U).
\end{align}
On the other hand, by using (\ref{e.q:2.1}), (\ref{e.q:2.2}) and (\ref{e.q:3.3}), we get
\begin{align}\label{e.q:3.16}
g_{M}(\nabla^{^M}_{U}V,JW)=-g_{M}(\omega(\hat{\nabla}_{V}\phi U+T_{V}\omega U),JW).
\end{align}
Thus proof follows from (\ref{e.q:3.15}) and (\ref{e.q:3.16}).
\end{proof}
Next we give certain conditions for dilation $\lambda$ to be constant on $\mu$. We first give the following definition.
\begin{definition}
Let $F$ be a conformal semi-invariant submersion from a K\"{a}hler manifold $(M,g_{M},J)$ to a Riemannian manifold $(N,g_{N})$. Then we say that $\mu$ is parallel along $kerF_{*}$ if $\nabla^{^M}_{U}X\in\Gamma(\mu)$ for $X\in\Gamma(\mu)$ and $U\in\Gamma(kerF_{*}).$
\end{definition}
\begin{corollary}
Let $F:(M,g_{M},J)\longrightarrow (N,g_{N})$ be a conformal semi-invariant submersion such that $\mu$ is parallel along $(kerF_{*})$. Then $F$ is a constant on $\mu$  if and only if
\begin{equation}\label{e.q:3.17}
\lambda^{2}g_{M}(\mathcal{C}T_{U}\phi V+A_{\omega V}\phi U,X)=g_{N}(\nabla^{F}_{\omega V}F_{*}X,F_{*}\omega U)
\end{equation}
for $U,V\in\Gamma(kerF_{*})$ and $X\in\Gamma(\mu)$.
\end{corollary}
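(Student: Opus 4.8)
The plan is to read the result off from identity (\ref{e.q:3.15}), which was derived in the course of proving Theorem~\ref{teo3}. The first step is to invoke the standing hypothesis that $\mu$ is parallel along $kerF_*$. For $U,V\in\Gamma(kerF_*)$ and $X\in\Gamma(\mu)$ this gives $\nabla^{^M}_{U}X\in\Gamma(\mu)\subseteq\Gamma((kerF_*)^\perp)$, and since $V$ is vertical while both $X$ and $\nabla^{^M}_{U}X$ are horizontal,
\[
g_{M}(\nabla^{^M}_{U}V,X)=U\big(g_{M}(V,X)\big)-g_{M}(V,\nabla^{^M}_{U}X)=0 .
\]
Feeding this into (\ref{e.q:3.15}) collapses that identity to the single relation
\[
\lambda^{2}g_{M}(\mathcal{C}T_{U}\phi V+A_{\omega V}\phi U,X)+\lambda^{2}g_{M}(\omega V,\omega U)\,g_{M}(gradln\lambda,X)=g_{N}(\nabla^{F}_{\omega V}F_{*}X,F_{*}\omega U),
\]
holding for all $U,V\in\Gamma(kerF_*)$ and $X\in\Gamma(\mu)$.

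Both implications then follow at once. If the dilation $\lambda$ is constant on $\mu$, then $g_{M}(gradln\lambda,X)=0$ for every $X\in\Gamma(\mu)$, so the middle term drops and (\ref{e.q:3.17}) results. Conversely, assuming (\ref{e.q:3.17}), the displayed relation forces $g_{M}(\omega V,\omega U)\,g_{M}(gradln\lambda,X)=0$ for all such $U,V,X$; taking $U=V\in\Gamma(D_2)$, where $\phi U=0$ and $\omega U=JU$, we obtain $g_{M}(U,U)\,g_{M}(gradln\lambda,X)=0$, hence $g_{M}(gradln\lambda,X)=0$ on $\mu$, which is precisely the statement that $\lambda$ is constant on $\mu$.

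The argument is routine and parallels the earlier corollaries, so I expect no serious obstacle. The two points that deserve a little attention are the interpretation of ``$F$ is constant on $\mu$'' as the vanishing of the $\mu$-component of $gradln\lambda$, and the verification that the parallelism hypothesis genuinely yields $g_{M}(\nabla^{^M}_{U}V,X)=0$ and not merely a weaker orthogonality — this is where one simultaneously uses that $V$ is vertical and that $\mu\subseteq(kerF_*)^\perp$.
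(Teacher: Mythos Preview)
Your argument is correct and follows the same route as the paper: both proofs read the result directly off (\ref{e.q:3.15}), using the parallelism of $\mu$ along $kerF_*$ to kill the left-hand side $g_M(\nabla^{^M}_U V,X)$, so that (\ref{e.q:3.17}) is equivalent to the vanishing of $g_M(\omega V,\omega U)\,g_M(gradln\lambda,X)$. Your write-up is in fact more explicit than the paper's, which leaves the use of the parallelism hypothesis implicit and does not spell out the choice $U=V\in\Gamma(D_2)$; your reading of ``$F$ is constant on $\mu$'' as ``$\lambda$ is constant on $\mu$'' is also the intended one.
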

\begin{proof}
If we have (\ref{e.q:3.17}) then we have
\begin{equation}
g_{M}(\omega V,\omega U)g_{M}(gradln\lambda,X)=0.
\end{equation}
From above equation, $\lambda$ is a constant on $\Gamma(\mu)$. The converse comes from  (\ref{e.q:3.15})
\end{proof}
From Theorem \ref{teo2} and Theorem \ref{teo3}, we have the following decomposition for total space;
\begin{theorem}
Let $F:(M,g_{M},J)\longrightarrow (N,g_{N})$ be a conformal semi-invariant submersion, where $(M,g_{M},J)$ is a a K\"{a}hler manifold $(N,g_{N})$ is a Riemannian manifold. Then $M$ is a locally product manifold of the form $M_{(kerF_{*})}\times_{\lambda} M_{(kerF_{*})^\perp}$ if and only if
\begin{equation*}
\lambda^{2}\{g_{M}(\mathcal{C}T_{U}\phi V+A_{\omega V}\phi U+g_{M}(\omega V,\omega U)gradln\lambda,X)\}
=g_{N}(\nabla^{F}_{\omega V}F_{*}X,F_{*}\omega U),
\end{equation*}
\begin{equation*}
T_{V}\omega U+\hat{\nabla}_{V}\phi U\in\Gamma(D_1)
\end{equation*}
and
\begin{equation*}
A_{X}\mathcal{C}Y+\mathcal{V}\nabla^{^M}_{X}\mathcal{B}Y\in\Gamma(D_2),
\end{equation*}
\begin{equation*}
\lambda^{2}\{g_{M}(A_{X}\mathcal{B}Y-\mathcal{C}Y(ln\lambda)X+g_{M}(X,\mathcal{C}Y)gradln\lambda,JW)\}=g_{N}(\nabla^{F}_{X}F_{*}JW,F_{*}\mathcal{C}Y)
\end{equation*}
for $X,Y\in\Gamma((kerF_{*})^\perp)$ and $U,V,W\in\Gamma(kerF_{*})$.
\end{theorem}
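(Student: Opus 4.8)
The plan is to deduce this decomposition theorem by combining the two totally geodesic foliation criteria already established, namely Theorems~\ref{teo2} and~\ref{teo3}, with the classical splitting principle for a Riemannian manifold that carries two complementary orthogonal foliations. Recall that, since $kerF_{*}$ and $(kerF_{*})^\perp$ are mutually orthogonal and together span $TM$, the total space $M$ is, locally, a product $M_{(kerF_{*})}\times_{\lambda} M_{(kerF_{*})^\perp}$ exactly when \emph{both} of these distributions define totally geodesic foliations: the vanishing of the relevant components of the fundamental tensors forces the metric to have no mixed terms with respect to the splitting $TM=kerF_{*}\oplus(kerF_{*})^\perp$, while the dilation $\lambda$ records only that a leaf of $(kerF_{*})^\perp$, equipped with the metric $\lambda^{-2}F^{*}g_{N}$, is carried conformally onto an open subset of $N$ by the horizontally conformal submersion $F$. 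Thus the whole argument reduces to rereading the hypotheses through these two previous theorems.

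Concretely, I would first quote Theorem~\ref{teo3}, which asserts that $kerF_{*}$ defines a totally geodesic foliation on $M$ if and only if
\[
\lambda^{2}\{g_{M}(\mathcal{C}T_{U}\phi V+A_{\omega V}\phi U+g_{M}(\omega V,\omega U)gradln\lambda,X)\}=g_{N}(\nabla^{F}_{\omega V}F_{*}X,F_{*}\omega U)
\]
and $T_{V}\omega U+\hat{\nabla}_{V}\phi U\in\Gamma(D_1)$ for all $U,V\in\Gamma(kerF_{*})$ and $X\in\Gamma(\mu)$, which are precisely the first two conditions displayed in the statement. Then I would quote Theorem~\ref{teo2}, which asserts that $(kerF_{*})^\perp$ defines a totally geodesic foliation on $M$ if and only if $A_{X}\mathcal{C}Y+\mathcal{V}\nabla^{^M}_{X}\mathcal{B}Y\in\Gamma(D_2)$ and
\[
\lambda^{2}\{g_{M}(A_{X}\mathcal{B}Y-\mathcal{C}Y(ln\lambda)X+g_{M}(X,\mathcal{C}Y)gradln\lambda,JW)\}=g_{N}(\nabla^{F}_{X}F_{*}JW,F_{*}\mathcal{C}Y)
\]
for $X,Y\in\Gamma((kerF_{*})^\perp)$ and $W\in\Gamma(D_2)$, which are precisely the remaining two conditions. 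Putting these two equivalences together with the splitting principle gives the theorem in both directions.

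The one step that is not pure bookkeeping is making the splitting statement precise in this conformal setting. Once both foliations are known to be totally geodesic, orthogonality of $kerF_{*}$ and $(kerF_{*})^\perp$ immediately yields, locally, $g_{M}=g_{M}|_{kerF_{*}}\oplus g_{M}|_{(kerF_{*})^\perp}$, so $M$ is a local Riemannian product of a leaf $M_{(kerF_{*})}$ and a leaf $M_{(kerF_{*})^\perp}$; the extra point is simply that $F$ restricts on each horizontal leaf to a horizontally conformal diffeomorphism with dilation $\lambda$, which is exactly the information encoded by the subscript $\lambda$ on $M_{(kerF_{*})^\perp}$. I expect this identification of the horizontal leaf metric — rather than any new computation with the tensors $T$, $A$, $\phi$, $\omega$, $\mathcal{B}$, $\mathcal{C}$ — to be the only place demanding care, since the analytic content of the four displayed conditions has already been extracted in Theorems~\ref{teo2} and~\ref{teo3}.
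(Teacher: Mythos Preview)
Your proposal is correct and matches the paper's own approach exactly: the paper does not give a separate proof of this theorem at all, but simply prefaces it with the sentence ``From Theorem~\ref{teo2} and Theorem~\ref{teo3}, we have the following decomposition for total space,'' i.e.\ the authors likewise regard it as the conjunction of those two totally geodesic foliation criteria together with the standard local splitting for complementary orthogonal totally geodesic foliations. Your added remarks about the meaning of the subscript $\lambda$ go beyond what the paper supplies, but they do not conflict with the paper's intent.
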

Since $(kerF_{*})^\perp=J(D_{2})\oplus \mu$ and $F$ is a conformal semi-invariant submersion from an almost Hermitian manifold $(M,g_{M},J)$ to a Riemannian manifold $(N,g_{N})$, for $X\in\Gamma(D_{2})$ and $Y\in\Gamma(\mu)$, we have $$\frac{1}{\lambda^2}g_{N}(F_{*}JX,F_{*}Y)=g_{M}(JX,Y)=0.$$ This implies that the distributions $F_{*}(JD_{2})$ and $F_{*}(\mu)$ are orthogonal.
Now, we investigate the geometry of the leaves of the distributitons $D_{1}$ and $D_{2}$.
\begin{theorem}\label{teo4}
Let $F$ be a conformal semi-invariant submersion from a K\"{a}hler manifold $(M,g_{M},J)$ to a Riemannian manifold $(N,g_{N})$. Then $D_{1}$ defines a totally geodesic foliation on $M$ if and only if $$(\nabla F_{*})(X_{1},JY_{1})\in\Gamma(F_*\mu)$$ and $$\frac{1}{\lambda^2}g_{N}((\nabla F_{*})(X_{1},JY_{1}),F_{*}\mathcal{C}X)=g_{M}(Y_{1},T_{X_1}\omega\mathcal{B}X)$$ for $X_{1},Y_{1}\in\Gamma(D_{1})$ and $X\in\Gamma((kerF_{*})^\perp)$.
\end{theorem}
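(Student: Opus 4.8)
The plan is to translate the totally-geodesic-foliation condition into two orthogonality statements and verify each one. Since $TM = D_{1}\oplus D_{2}\oplus(kerF_{*})^\perp$ is an orthogonal decomposition, the distribution $D_{1}$ defines a totally geodesic foliation on $M$ if and only if $g_{M}(\nabla^{^M}_{X_{1}}Y_{1},W)=0$ and $g_{M}(\nabla^{^M}_{X_{1}}Y_{1},X)=0$ for all $X_{1},Y_{1}\in\Gamma(D_{1})$, $W\in\Gamma(D_{2})$ and $X\in\Gamma((kerF_{*})^\perp)$ (note that these conditions automatically force integrability of $D_{1}$, so the statement is meaningful). I would establish these two requirements separately and then combine them.

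\emph{The $D_{2}$-component.} Using that $D_{1}$ is $J$-invariant and that $M$ is K\"ahler, I would write $g_{M}(\nabla^{^M}_{X_{1}}Y_{1},W)=g_{M}(\nabla^{^M}_{X_{1}}JY_{1},JW)$ via (\ref{e.q:2.1}) and (\ref{e.q:2.2}). Since $Y_{1}\in D_{1}\subseteq kerF_{*}$ implies $JY_{1}\in kerF_{*}$, the decomposition (\ref{nvw}) gives $\nabla^{^M}_{X_{1}}JY_{1}=T_{X_{1}}JY_{1}+\hat\nabla_{X_{1}}JY_{1}$, and pairing against the horizontal vector $JW$ leaves only $g_{M}(T_{X_{1}}JY_{1},JW)$. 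Then Lemma \ref{lem1}(ii) ($(\nabla F_{*})(X_{1},JY_{1})=-F_{*}(T_{X_{1}}JY_{1})$) together with horizontal conformality yields $g_{M}(\nabla^{^M}_{X_{1}}Y_{1},W)=-\frac{1}{\lambda^{2}}g_{N}((\nabla F_{*})(X_{1},JY_{1}),F_{*}JW)$. Because $(\nabla F_{*})(X_{1},JY_{1})=-F_{*}(T_{X_{1}}JY_{1})$ lies in $F_{*}((kerF_{*})^\perp)=F_{*}(JD_{2})\oplus F_{*}(\mu)$, which is an orthogonal sum by the observation preceding the theorem, this vanishes for every $W\in\Gamma(D_{2})$ exactly when $(\nabla F_{*})(X_{1},JY_{1})\in\Gamma(F_{*}\mu)$, the first stated condition.

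\emph{The $(kerF_{*})^\perp$-component.} Here I would decompose $JX=\mathcal{B}X+\mathcal{C}X$ with $\mathcal{B}X\in\Gamma(D_{2})$, $\mathcal{C}X\in\Gamma(\mu)$ as in (\ref{e.q:3.4}), and apply (\ref{e.q:2.2}) once more to get $g_{M}(\nabla^{^M}_{X_{1}}Y_{1},X)=g_{M}(\nabla^{^M}_{X_{1}}JY_{1},\mathcal{B}X)+g_{M}(\nabla^{^M}_{X_{1}}JY_{1},\mathcal{C}X)$. For the $\mathcal{B}X$-summand I would \emph{not} pass to the base (since $\mathcal{B}X$ is vertical); instead I use $g_{M}(\cdot,\cdot)=g_{M}(J\cdot,J\cdot)$, the identity $J\nabla^{^M}_{X_{1}}JY_{1}=-\nabla^{^M}_{X_{1}}Y_{1}$, and $J\mathcal{B}X=\omega\mathcal{B}X$ (since $\phi\mathcal{B}X=0$ by (\ref{e.q:3.3}) and Definition \ref{def}), then (\ref{nvw}) and the skew-symmetry of $T_{X_{1}}$, arriving at $g_{M}(\nabla^{^M}_{X_{1}}JY_{1},\mathcal{B}X)=g_{M}(Y_{1},T_{X_{1}}\omega\mathcal{B}X)$. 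For the $\mathcal{C}X$-summand, $\mathcal{C}X$ is horizontal so only $T_{X_{1}}JY_{1}$ contributes, and Lemma \ref{lem1}(ii) with horizontal conformality turn it into $-\frac{1}{\lambda^{2}}g_{N}((\nabla F_{*})(X_{1},JY_{1}),F_{*}\mathcal{C}X)$. Adding the two summands gives $g_{M}(\nabla^{^M}_{X_{1}}Y_{1},X)=g_{M}(Y_{1},T_{X_{1}}\omega\mathcal{B}X)-\frac{1}{\lambda^{2}}g_{N}((\nabla F_{*})(X_{1},JY_{1}),F_{*}\mathcal{C}X)$, which vanishes precisely under the second stated identity. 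Combining the two cases finishes the proof.

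I do not expect a serious obstacle here; the argument is essentially careful bookkeeping. The only delicate points are keeping track of which components are vertical versus horizontal so that (\ref{nvw})--(\ref{nxy}) and Lemma \ref{lem1} are applied to the correct pieces, handling the vertical term $\mathcal{B}X$ by K\"ahler identities rather than by projecting to $N$, and invoking the orthogonality of $F_{*}(JD_{2})$ and $F_{*}(\mu)$ to upgrade ``orthogonal to $F_{*}(JD_{2})$'' to ``lies in $F_{*}(\mu)$''.
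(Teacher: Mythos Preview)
Your proposal is correct and follows essentially the same route as the paper: split the orthogonality test against $D_{2}$ and $(kerF_{*})^\perp$, use the K\"{a}hler identity to replace $\nabla^{^M}_{X_{1}}Y_{1}$ by $\nabla^{^M}_{X_{1}}JY_{1}$, and then translate the horizontal pieces to $N$ via $(\nabla F_{*})(X_{1},JY_{1})=-F_{*}(T_{X_{1}}JY_{1})$ while handling the $\mathcal{B}X$-part inside $M$. The only cosmetic differences are that the paper invokes (\ref{nfixy}) directly where you cite Lemma \ref{lem1}(ii), and for the $\mathcal{B}X$-term the paper shifts the derivative onto $J\mathcal{B}X=\omega\mathcal{B}X$ and applies (\ref{nvx}), whereas you use the $J$-isometry and skew-symmetry of $T_{X_{1}}$; both paths give $g_{M}(Y_{1},T_{X_{1}}\omega\mathcal{B}X)$.
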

\begin{proof}
The distribution $D_{1}$ defines a totally geodesic foliation on $M$ if and only if $g_{M}(\nabla^{^M}_{X_{1}}Y_{1},X_{2})=0$ and $g_{M}(\nabla^{^M}_{X_{1}}Y_{1},X)=0$ for $X_{1},Y_{1}\in\Gamma(D_{1}),X_{2}\in\Gamma(D_{2})$ and $X\in\Gamma((kerF_{*})^\perp)$. Using (\ref{e.q:2.1}) and (\ref{e.q:2.2}), we get
$$
g_{M}(\nabla^{^M}_{X_{1}}Y_{1},X_{2})=g_{M}(\mathcal{H}\nabla^{^M}_{X_{1}}JY_{1},JX_{2}).$$
Since $F$ is a conformal semi-invariant submersion, using (\ref{nfixy}) we have
\begin{align}\label{nx1y1x2}
g_{M}(\nabla^{^M}_{X_{1}}Y_{1},X_{2})&=-\frac{1}{\lambda^2}g_{N}((\nabla F_{*})(X_{1},JY_{1}),F_{*}JX_{2}).
\end{align}
On the other hand, by using (\ref{e.q:2.1}), (\ref{e.q:2.2}), (\ref{nvw}) and (\ref{e.q:3.4}) we derive
$$
g_{M}(\nabla^{^M}_{X_{1}}Y_{1},X)=g_{M}(Y_1,\nabla^{^M}_{X_1}J\mathcal{B}X)+g_{M}(\mathcal{H}\nabla^{^M}_{X_{1}}JY_{1},\mathcal{C}X).
$$
Now from (\ref{nvx}), (\ref{nfixy}) and (\ref{e.q:3.3}) we have
\begin{equation}\label{x1y1x}
g_{M}(\nabla^{^M}_{X_{1}}Y_{1},X)=g_{M}(Y_1,T_{X_1}\omega\mathcal{B}X)-\frac{1}{\lambda^2}g_{N}((\nabla F_{*})(X_{1},JY_{1}),F_{*}\mathcal{C}X).
\end{equation}
Thus proof follows from (\ref{nx1y1x2}) and (\ref{x1y1x}).
\end{proof}
For $D_{2}$ we have the following result.
\begin{theorem}\label{teo5}
Let $F$ be a conformal semi-invariant submersion from a K\"{a}hler manifold $(M,g_{M},J)$ to a Riemannian manifold $(N,g_{N})$. Then $D_{2}$ defines a totally geodesic foliation on $M$ if and only if $$(\nabla F_{*})(X_{2},JX_{1})\in\Gamma(F_*\mu)$$ and $$-\frac{1}{\lambda^2}g_{N}(\nabla^{F}_{JY_{2}} F_{*}JX_{2},F_{*}J\mathcal{C}X)=g_{M}(Y_{2},\mathcal{B}T_{X_{2}}\mathcal{B}X)+g_{M}(X_{2},Y_{2})g_{M}(\mathcal{H}gradln\lambda,J\mathcal{C}X)$$ for $X_{2},Y_{2}\in\Gamma(D_{2}),X_{1}\in\Gamma(D_{1})$ and $X\in\Gamma((kerF_{*})^\perp)$.
\end{theorem}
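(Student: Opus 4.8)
Throughout I follow the scheme of the earlier results (compare Theorem~\ref{teo4}): since $TM=kerF_*\oplus(kerF_*)^\perp$ and $kerF_*=D_1\oplus D_2$, the distribution $D_2$ defines a totally geodesic foliation on $M$ if and only if $g_M(\nabla^{M}_{X_2}Y_2,X_1)=0$ and $g_M(\nabla^{M}_{X_2}Y_2,X)=0$ for all $X_1\in\Gamma(D_1)$, $X\in\Gamma((kerF_*)^\perp)$ and $X_2,Y_2\in\Gamma(D_2)$. The plan is to compute the $D_1$-component and the horizontal component of $\nabla^{M}_{X_2}Y_2$ separately, getting a closed formula for each, and then read off the two conditions.

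For the $D_1$-component I would use $\nabla^{M}J=0$ and $J^2=-I$ to replace $g_M(\nabla^{M}_{X_2}Y_2,X_1)$ by $g_M(\nabla^{M}_{X_2}JY_2,JX_1)$; as $JY_2\in JD_2$ is horizontal while $JX_1\in D_1$ is vertical, (\ref{nvx}) retains only the vertical part $T_{X_2}JY_2$, and skew-symmetry of $T_{X_2}$ turns the pairing into $-g_M(JY_2,T_{X_2}JX_1)$. Since $T_{X_2}JX_1$ is horizontal, the horizontal conformality relation $g_N(F_*\cdot,F_*\cdot)=\lambda^{2}g_M(\cdot,\cdot)$ and Lemma~\ref{lem1}(ii) give $g_M(\nabla^{M}_{X_2}Y_2,X_1)=\frac{1}{\lambda^{2}}g_N(F_*JY_2,(\nabla F_*)(X_2,JX_1))$. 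Because $(\nabla F_*)(X_2,JX_1)=-F_*(T_{X_2}JX_1)$ lies in $F_*((kerF_*)^\perp)=F_*(JD_2)\oplus F_*(\mu)$ and these summands are orthogonal (as noted just above the theorem), the vanishing of this expression for every $Y_2\in\Gamma(D_2)$ is precisely $(\nabla F_*)(X_2,JX_1)\in\Gamma(F_*\mu)$, the first asserted condition.

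For the horizontal component I would again write $g_M(\nabla^{M}_{X_2}Y_2,X)=g_M(\nabla^{M}_{X_2}JY_2,JX)$ and split $JX=\mathcal{B}X+\mathcal{C}X$ via (\ref{e.q:3.4}). The $\mathcal{B}X$-term lives inside the fibre: (\ref{nvx}), skew-symmetry of $T_{X_2}$, the $J$-compatibility $g_M(JE,G)=-g_M(E,JG)$, and (\ref{e.q:3.4}) once more collapse it to $g_M(Y_2,\mathcal{B}T_{X_2}\mathcal{B}X)$. The $\mathcal{C}X$-term is the substantive step: I would move the covariant derivative off $X_2$ and onto $JY_2$, using that $\mathcal{H}\nabla^{M}_{X_2}JY_2=A_{JY_2}X_2$ for basic $JY_2$ (equivalently, that $[X_2,JY_2]$ is vertical), then skew-symmetry of $A$ and $g_M(X_2,\mathcal{C}X)=0$, to reach $g_M(\nabla^{M}_{X_2}JY_2,\mathcal{C}X)=g_M(\mathcal{H}\nabla^{M}_{JY_2}X_2,\mathcal{C}X)$; applying $\nabla^{M}J=0$, $X_2=-J^{2}X_2$ and the $J$-compatibility of $g_M$ rewrites this as $g_M(\mathcal{H}\nabla^{M}_{JY_2}JX_2,J\mathcal{C}X)$. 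Now the conformality relation, (\ref{nfixy}) and Lemma~\ref{lem1}(i) applied to $(\nabla F_*)(JY_2,JX_2)$ yield $g_M(\nabla^{M}_{X_2}JY_2,\mathcal{C}X)=\frac{1}{\lambda^{2}}g_N(\nabla^{F}_{JY_2}F_*JX_2,F_*J\mathcal{C}X)+g_M(X_2,Y_2)g_M(\mathcal{H}gradln\lambda,J\mathcal{C}X)$, the two remaining terms of Lemma~\ref{lem1}(i) dropping out since they carry $F_*JX_2$ and $F_*JY_2\in F_*(JD_2)$ while $J\mathcal{C}X\in\mu$. Adding the two contributions gives $g_M(\nabla^{M}_{X_2}Y_2,X)=g_M(Y_2,\mathcal{B}T_{X_2}\mathcal{B}X)+\frac{1}{\lambda^{2}}g_N(\nabla^{F}_{JY_2}F_*JX_2,F_*J\mathcal{C}X)+g_M(X_2,Y_2)g_M(\mathcal{H}gradln\lambda,J\mathcal{C}X)$, whose vanishing for all $X\in\Gamma((kerF_*)^\perp)$ is exactly the second displayed condition; combined with the $D_1$-part this proves the equivalence.

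The chief difficulty I anticipate is the sign bookkeeping along the chain of Kähler identities ($\nabla^{M}J=0$, $g_M(JE,JG)=g_M(E,G)$, $g_M(JE,G)=-g_M(E,JG)$) interleaved with the skew-symmetry of $T$ and $A$ and the O'Neill splittings (\ref{nvw})--(\ref{nxy}). The only genuinely non-mechanical step is recognising that the derivative has to be transferred onto $JY_2$ so that a $\nabla^{F}_{JY_2}$ term can surface — the Lie-bracket correction being harmless because $[X_2,JY_2]$ is vertical for basic $JY_2$ — and then invoking the orthogonality $F_*(JD_2)\perp F_*(\mu)$ to annihilate the dilation terms produced by Lemma~\ref{lem1}(i).
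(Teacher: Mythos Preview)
Your proposal is correct and follows essentially the same route as the paper: both proofs split into the $D_1$- and horizontal components, pass from $g_M(\nabla^{M}_{X_2}Y_2,\cdot)$ to $g_M(\nabla^{M}_{X_2}JY_2,J\cdot)$ via the K\"{a}hler condition, handle the $D_1$-part through $T$ and Lemma~\ref{lem1}(ii), and for the $\mathcal{C}X$-part exploit that $[X_2,JY_2]$ is vertical to shift the derivative onto $JY_2$ before applying conformality and Lemma~\ref{lem1}(i). Your exposition is in fact a bit more explicit than the paper's about why the dilation terms carrying $F_*JX_2$ and $F_*JY_2$ drop out (orthogonality of $F_*(JD_2)$ and $F_*(\mu)$), but the underlying computation and the resulting identities (\ref{nx2y2x1}) and (\ref{nx2y2x}) are identical.
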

\begin{proof}
The distribution $D_{2}$ defines a totally geodesic foliation on $M$ if and only if $g_{M}(\nabla^{^M}_{X_{2}}Y_{2},X_{1})=0$ and $g_{M}(\nabla^{^M}_{X_{2}}Y_{2},X)=0$ for $X_{2},Y_{2}\in\Gamma(D_{2}),X_{1}\in\Gamma(D_{1})$ and $X\in\Gamma((kerF_{*})^\perp)$. Since $F$ is conformal semi-invariant submersion, using (\ref{e.q:2.1}) and (\ref{e.q:2.2}) and (\ref{nfixy}), we get
\begin{align}\label{nx2y2x1}
g_{M}(\nabla^{^M}_{X_{2}}Y_{2},X_{1})&=\frac{1}{\lambda^2}g_{N}((\nabla F_{*})(X_{2},JX_{1}),F_{*}JY_{2}).
\end{align}
On the other hand, by using (\ref{e.q:2.1}), (\ref{e.q:2.2}), (\ref{nvw}) and (\ref{e.q:3.4}) we derive
$$
g_{M}(\nabla^{^M}_{X_{2}}Y_{2},X)=-g_{M}(JY_{2},T_{X_{2}}\mathcal{B}X)+g_{M}([X_{2},JY_{2}]+\nabla^{^M}_{JY_{2}}X_{2},\mathcal{C}X).$$
Since $[X_{2},JY_{2}]\in \Gamma(ker F_*)$, we obtain
$$
 g_{M}(\nabla^{^M}_{X_{2}}Y_{2},X)=-g_{M}(JY_{2},T_{X_{2}}\mathcal{B}X)+g_{M}(\nabla^{^M}_{JY_{2}}JX_{2},J\mathcal{C}X).
$$
Then conformal semi-invariant submersion $F$, (\ref{e.q:3.4}) and  (\ref{nfixy}) imply
\begin{align}\label{nx2y2x}
g_{M}(\nabla^{^M}_{X_{2}}Y_{2},X)&=g_{M}(Y_{2},\mathcal{B}T_{X_{2}}\mathcal{B}X)+g_{M}(Y_{2},X_{2})g_{M}(\mathcal{H}gradln\lambda,J\mathcal{C}X)\notag\\
&+\frac{1}{\lambda^2}g_{N}(\nabla^{F}_{JY_{2}}F_{*}JX_{2},F_{*}J\mathcal{C}X).
\end{align}
Thus proof follows from (\ref{nx2y2x1}) and (\ref{nx2y2x}).
\end{proof}
From Theorem \ref{teo4} and Theorem \ref{teo5}, we have the following theorem;
\begin{theorem}
Let $F:(M,g_{M},J)\longrightarrow (N,g_{N})$ be a conformal semi-invariant submersion from a K\"{a}hler manifold  $(M,g_{M},J)$  onto  a Riemannian manifold $(N,g_{N})$.  Then the fibers of $F$ are locally product manifold if and only if
\begin{equation*}
(\nabla F_{*})(X_{1},JY_{1})\in\Gamma(F_*\mu),
\end{equation*}
\begin{equation*}
\frac{1}{\lambda^2}g_{N}((\nabla F_{*})(X_{1},JY_{1}),F_{*}\mathcal{C}X)=g_{M}(Y_{1},T_{X_1}\omega\mathcal{B}X)
\end{equation*}
and
\begin{equation*}
(\nabla F_{*})(X_{2},JX_{1})\in\Gamma(F_*\mu),
\end{equation*}
\begin{equation*}
-\frac{1}{\lambda^2}g_{N}(\nabla^{F}_{JY_{2}} F_{*}JX_{2},F_{*}J\mathcal{C}X)=g_{M}(Y_{2},\mathcal{B}T_{X_{2}}\mathcal{B}X)+g_{M}(X_{2},Y_{2})g_{M}(\mathcal{H}gradln\lambda,J\mathcal{C}X)
\end{equation*}
for any $X_{1},Y_{1}\in\Gamma(D_{1})$, $X_{2},Y_{2}\in\Gamma(D_{2})$ and $X\in\Gamma((kerF_{*})^\perp)$.
\end{theorem}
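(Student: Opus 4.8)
The plan is to obtain the statement as an immediate consequence of Theorem \ref{teo4} and Theorem \ref{teo5}, once ``the fibers of $F$ are locally product manifolds'' has been reformulated purely in terms of the distributions $D_{1}$ and $D_{2}$. Since $kerF_{*}$ is integrable, each fiber $F^{-1}(q)$ is an embedded Riemannian submanifold of $M$ whose tangent bundle splits orthogonally as $D_{1}\oplus D_{2}$. First I would invoke the local version of the de Rham decomposition theorem: a Riemannian manifold whose tangent bundle is an orthogonal direct sum of two distributions is locally a Riemannian product of their integral manifolds if and only if each of the two distributions is integrable and defines a totally geodesic foliation. Applied to $F^{-1}(q)$, this reduces the claim to the conjunction ``$D_{1}$ defines a totally geodesic foliation and $D_{2}$ defines a totally geodesic foliation.''

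Next I would simply quote the two results already established. Theorem \ref{teo4} characterizes ``$D_{1}$ defines a totally geodesic foliation'' by the two conditions $(\nabla F_{*})(X_{1},JY_{1})\in\Gamma(F_{*}\mu)$ and $\frac{1}{\lambda^{2}}g_{N}((\nabla F_{*})(X_{1},JY_{1}),F_{*}\mathcal{C}X)=g_{M}(Y_{1},T_{X_{1}}\omega\mathcal{B}X)$, while Theorem \ref{teo5} characterizes ``$D_{2}$ defines a totally geodesic foliation'' by $(\nabla F_{*})(X_{2},JX_{1})\in\Gamma(F_{*}\mu)$ together with $-\frac{1}{\lambda^{2}}g_{N}(\nabla^{F}_{JY_{2}}F_{*}JX_{2},F_{*}J\mathcal{C}X)=g_{M}(Y_{2},\mathcal{B}T_{X_{2}}\mathcal{B}X)+g_{M}(X_{2},Y_{2})g_{M}(\mathcal{H}gradln\lambda,J\mathcal{C}X)$. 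Taking the logical conjunction of these two biconditionals reproduces exactly the four displayed equations of the theorem, so the argument is complete.

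The one point requiring a little care — and what I expect to be the main, though entirely routine, obstacle — is the reduction step itself: the decomposition criterion must be applied intrinsically inside the fiber, using its induced Levi-Civita connection $\hat{\nabla}_{V}W=\mathcal{V}\nabla^{^M}_{V}W$, and one has to confirm that the totally geodesic conditions that come out, namely $g_{M}(\hat{\nabla}_{X_{1}}Y_{1},X_{2})=0$ and $g_{M}(\hat{\nabla}_{X_{2}}Y_{2},X_{1})=0$, are the same conditions $g_{M}(\nabla^{^M}_{X_{1}}Y_{1},X_{2})=0$ and $g_{M}(\nabla^{^M}_{X_{2}}Y_{2},X_{1})=0$ that enter the proofs of Theorems \ref{teo4} and \ref{teo5} (which is clear because the test fields $X_{1},X_{2}$ are vertical, so the horizontal component $T_{\cdot}\,\cdot$ of the ambient derivative drops out of the inner products). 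Granting this identification, no new computation is needed: the theorem is just the combination of the two foliation results already proved.
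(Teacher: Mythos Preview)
Your overall approach is exactly the paper's: the paper offers no proof at all beyond the sentence ``From Theorem~\ref{teo4} and Theorem~\ref{teo5}, we have the following theorem,'' so citing those two results is precisely what is intended.

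That said, the care point you flag is real, and your resolution of it is incomplete. The de Rham criterion applied \emph{intrinsically} on a fiber requires only $g_{M}(\hat{\nabla}_{X_{1}}Y_{1},X_{2})=0$ and $g_{M}(\hat{\nabla}_{X_{2}}Y_{2},X_{1})=0$; as you observe, with vertical test vectors these coincide with $g_{M}(\nabla^{^M}_{X_{1}}Y_{1},X_{2})=0$ and $g_{M}(\nabla^{^M}_{X_{2}}Y_{2},X_{1})=0$. In the proofs of Theorems~\ref{teo4} and~\ref{teo5} those two equalities produce exactly the \emph{first} and \emph{third} displayed conditions of the present theorem (equations~(\ref{nx1y1x2}) and~(\ref{nx2y2x1})). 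The \emph{second} and \emph{fourth} displayed conditions, however, come from the additional requirements $g_{M}(\nabla^{^M}_{X_{1}}Y_{1},X)=0$ and $g_{M}(\nabla^{^M}_{X_{2}}Y_{2},X)=0$ for horizontal $X$ (equations~(\ref{x1y1x}) and~(\ref{nx2y2x})): these encode that the leaves of $D_{1}$ and $D_{2}$ are totally geodesic \emph{in $M$}, not merely in the fiber. Your intrinsic de Rham reduction therefore recovers only half of the stated equivalence. The paper is tacitly reading ``the fibers are locally product manifolds'' as ``$D_{1}$ and $D_{2}$ each define totally geodesic foliations on $M$,'' parallel to the earlier product theorem for $M_{(kerF_{*})}\times M_{(kerF_{*})^\perp}$; under that reading the result is literally the conjunction of Theorems~\ref{teo4} and~\ref{teo5}, and no de Rham step is needed.
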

\section{Harmonicity of Conformal Semi-invariant submersions}
In this section, we are going to find necessary and sufficient conditions for a conformal semi-invariant submersions to be harmonic. We also investigate the necessary and sufficient conditions for such submersions to be totally geodesic. By considering the decomposition of the total space of conformal semi-invariant submersion, the following lemma comes from \cite{Baird-Wood}.
\begin{lemma}\label{teo6}
Let $F:(M^{2(m+n+r)},g_{M},J)\longrightarrow (N^{n+2r},g_{N})$ be a conformal semi-invariant submersion, where $(M,g_{M},J)$ is a K\"{a}hler manifold and $(N,g_{N})$ is a Riemannian manifold. Then the tension field $\tau$ of $F$ is
\begin{align}
\tau(F)&=-(2m+n)F_{*}(\mu^{kerF_{*}})+(2-n-2r)F_{*}(gradln\lambda),\label{tau}
\end{align}
where $\mu^{kerF_{*}}$ is the mean curvature vector field of the distribution of $kerF_{*}$.
\end{lemma}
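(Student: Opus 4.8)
The plan is to evaluate the tension field $\tau(F)=trace_{g_{M}}(\nabla F_{*})$ on a suitably adapted orthonormal frame, splitting the trace into its contribution from $kerF_{*}$ and its contribution from $(kerF_{*})^{\perp}$, and then applying Lemma~\ref{lem1} term by term; indeed, since the result is essentially already contained in \cite{Baird-Wood}, this is the natural route.

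First I would fix, at an arbitrary point of $M$, a $g_{M}$-orthonormal frame $\{e_{1},\dots,e_{2m+n},\tilde e_{1},\dots,\tilde e_{n+2r}\}$ of $TM$ with $\{e_{i}\}$ spanning $kerF_{*}$ and $\{\tilde e_{a}\}$ spanning $(kerF_{*})^{\perp}$. Here the dimension count $\dim kerF_{*}=2m+n$ and $\dim(kerF_{*})^{\perp}=\dim N=n+2r$ follows from $\dim D_{1}=2m$, $\dim D_{2}=n$ and $\dim\mu=2r$, together with $(kerF_{*})^{\perp}=J(D_{2})\oplus\mu$. Since the tension field is the $g_{M}$-trace of $\nabla F_{*}$ and no mixed term $(\nabla F_{*})(e_{i},\tilde e_{a})$ occurs on the diagonal, we get
\[
\tau(F)=\sum_{i=1}^{2m+n}(\nabla F_{*})(e_{i},e_{i})+\sum_{a=1}^{n+2r}(\nabla F_{*})(\tilde e_{a},\tilde e_{a}).
\]

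For the first sum I would invoke Lemma~\ref{lem1}(ii), which gives $(\nabla F_{*})(e_{i},e_{i})=-F_{*}(T_{e_{i}}e_{i})$; since $T$ restricted to $kerF_{*}\times kerF_{*}$ is the second fundamental form of the fibers, $\sum_{i}T_{e_{i}}e_{i}=(2m+n)\,\mu^{kerF_{*}}$ by the normalization of the mean curvature vector field, so this sum equals $-(2m+n)F_{*}(\mu^{kerF_{*}})$. For the second sum I would apply Lemma~\ref{lem1}(i) with $X=Y=\tilde e_{a}$, obtaining $(\nabla F_{*})(\tilde e_{a},\tilde e_{a})=2\tilde e_{a}(ln\lambda)F_{*}(\tilde e_{a})-F_{*}(gradln\lambda)$; summing over $a$, using $\tilde e_{a}(ln\lambda)=g_{M}(gradln\lambda,\tilde e_{a})$ and $\sum_{a}g_{M}(gradln\lambda,\tilde e_{a})\tilde e_{a}=\mathcal{H}(gradln\lambda)$ together with $F_{*}\circ\mathcal{H}=F_{*}$, the first term collapses to $2F_{*}(gradln\lambda)$ and the second to $-(n+2r)F_{*}(gradln\lambda)$. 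Adding the two contributions yields precisely (\ref{tau}).

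The computation is essentially bookkeeping, so there is no serious obstacle; the only points requiring care are the dimension count $\dim kerF_{*}=2m+n$, $\dim(kerF_{*})^{\perp}=n+2r$, and the choice of normalization convention for $\mu^{kerF_{*}}$ so that the unnormalized trace $\sum_{i}T_{e_{i}}e_{i}$ equals $(2m+n)\,\mu^{kerF_{*}}$, which is exactly what makes the coefficients in the stated formula come out as $-(2m+n)$ and $2-n-2r$.
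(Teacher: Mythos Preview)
Your proposal is correct and is exactly the argument the paper has in mind: the paper does not give its own proof but simply states that the lemma ``comes from \cite{Baird-Wood}'' once the decomposition $TM=kerF_{*}\oplus(kerF_{*})^{\perp}$ with the indicated dimensions is taken into account, and your computation via Lemma~\ref{lem1}(i),(ii) on an adapted orthonormal frame is precisely that derivation.
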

From Lemma \ref{teo6} we deduce that:
\begin{theorem}
Let $F:(M^{2(m+n+r)},g_{M},J)\longrightarrow (N^{n+2r},g_{N})$ be a conformal semi-invariant submersion such that $n+2r\neq2$ where $(M,g_{M},J)$ is a K\"{a}hler manifold and $(N,g_{N})$ is a Riemannian manifold. Then any three conditions below imply the fourth:
\begin{enumerate}
\item [(i)] $F$ is harmonic
\item [(ii)] The fibres are minimal
\item [(iii)] $F$ is a horizontally homothetic map.
\end{enumerate}
\end{theorem}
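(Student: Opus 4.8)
The plan is to read everything off from the tension--field identity of Lemma~\ref{teo6}, so essentially no new computation is required. First I would translate the three hypotheses into the vanishing of three vectors in $\Gamma(F^{-1}TN)$. By the definition of harmonicity recalled in Section~2, (i) is exactly $\tau(F)=0$. Since the restriction of O'Neill's tensor $T$ to $kerF_{*}\times kerF_{*}$ is the second fundamental form of the fibres and takes values in $(kerF_{*})^{\perp}$, the mean curvature vector field $\mu^{kerF_{*}}$ of the fibres is horizontal; and because $F$ is a horizontally conformal submersion, $F_{*}$ is injective on $(kerF_{*})^{\perp}$, so (ii), i.e. $\mu^{kerF_{*}}=0$, is equivalent to $F_{*}(\mu^{kerF_{*}})=0$. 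Finally, writing $gradln\lambda=\mathcal{H}(gradln\lambda)+\mathcal{V}(gradln\lambda)$ and using that $F_{*}$ annihilates vertical vectors and is injective on horizontal ones, (iii) --- that the gradient of $\lambda$ is vertical, equivalently $\mathcal{H}(gradln\lambda)=0$ --- is equivalent to $F_{*}(gradln\lambda)=0$.

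Next I would substitute these reformulations into (\ref{tau}),
$$\tau(F)=-(2m+n)\,F_{*}(\mu^{kerF_{*}})+(2-n-2r)\,F_{*}(gradln\lambda).$$
The coefficient $-(2m+n)=-\dim(kerF_{*})$ is non-zero since $F$ has non-trivial fibres, and $2-n-2r$ is non-zero precisely by the standing assumption $n+2r\neq2$. Hence $\tau(F)$ is a linear combination, with two non-zero coefficients, of the vectors $F_{*}(\mu^{kerF_{*}})$ and $F_{*}(gradln\lambda)$; consequently, if any two of the three vectors $\tau(F)$, $F_{*}(\mu^{kerF_{*}})$, $F_{*}(gradln\lambda)$ vanish, then so does the third. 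Feeding this back through the equivalences of the first paragraph, any two of (i), (ii), (iii) imply the remaining one, which is the assertion.

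I do not foresee a genuine obstacle: once Lemma~\ref{teo6} is in hand the argument is pure bookkeeping with the displayed linear relation. The only steps that really need a sentence of justification are the two equivalences used above --- ``$F$ horizontally homothetic $\Leftrightarrow F_{*}(gradln\lambda)=0$'' and ``the fibres are minimal $\Leftrightarrow F_{*}(\mu^{kerF_{*}})=0$'' --- and both rest on the same two facts: $\mu^{kerF_{*}}$ is a horizontal field, and $F_{*}$ restricted to $(kerF_{*})^{\perp}$ is injective because $F$ is horizontally conformal. Everything after that follows immediately from (\ref{tau}).
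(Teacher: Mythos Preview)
Your proposal is correct and follows the same approach as the paper, which simply states that the theorem is deduced from Lemma~\ref{teo6} without giving further details; you have spelled out the straightforward linear-algebra bookkeeping that the paper leaves implicit. Note that the statement's ``any three conditions below imply the fourth'' is evidently a slip for ``any two imply the third,'' and you have interpreted it correctly.
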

We also have the following result.
\begin{corollary}
Let $F$ be a conformal semi-invariant submersion from a K\"{a}hler manifold $(M,g_{M},J)$ to a Riemannian manifold $(N,g_{N})$. If $n+2r=2$ then $F$ is harmonic if and only if the fibres are minimal.
\end{corollary}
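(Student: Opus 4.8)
The plan is to read this off directly from Lemma \ref{teo6} together with the horizontal conformality of $F$. First I would recall that by Lemma \ref{teo6} the tension field of $F$ is
\begin{equation*}
\tau(F)=-(2m+n)F_{*}(\mu^{kerF_{*}})+(2-n-2r)F_{*}(gradln\lambda).
\end{equation*}
Imposing the hypothesis $n+2r=2$, the coefficient $2-n-2r$ of the second term vanishes, so that
\begin{equation*}
\tau(F)=-(2m+n)F_{*}(\mu^{kerF_{*}}).
\end{equation*}
Since $2m+n\neq0$, this already shows that $\tau(F)=0$ if and only if $F_{*}(\mu^{kerF_{*}})=0$, and recall that $F$ is harmonic precisely when $\tau(F)=0$.

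Next I would observe that the mean curvature vector field $\mu^{kerF_{*}}$ of the fibres is a horizontal vector field: it is obtained by tracing the second fundamental form $T\mid_{kerF_{*}\times kerF_{*}}$ of the fibres, and by (\ref{nvw}) we have $T_{V}W\in\Gamma((kerF_{*})^\perp)$ for $V,W\in\Gamma(kerF_{*})$. Because $F$ is a horizontally conformal submersion with nowhere vanishing dilation $\lambda$, the restriction of $F_{*}$ to the horizontal space $(kerF_{*})^\perp$ is, at each point, a linear isomorphism onto $TN$; in particular $F_{*}$ is injective on horizontal vectors. Hence $F_{*}(\mu^{kerF_{*}})=0$ forces $\mu^{kerF_{*}}=0$, i.e.\ the fibres are minimal; conversely, if the fibres are minimal then $\mu^{kerF_{*}}=0$ and therefore $\tau(F)=0$. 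Combining this with the previous paragraph yields the stated equivalence.

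There is essentially no hard step here, since the whole analytic content has already been packed into Lemma \ref{teo6}; the one point worth stating explicitly is the injectivity of $F_{*}$ on the horizontal distribution, which is exactly where the assumption $\lambda\neq0$ (that $F$ has no critical points) enters. Without it the implication ``$F_{*}(\mu^{kerF_{*}})=0\Rightarrow\mu^{kerF_{*}}=0$'' would not be available, so this is the only place in the argument that genuinely uses the conformal-submersion structure rather than just the formula for $\tau(F)$.
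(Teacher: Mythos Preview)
Your argument is correct and is exactly the immediate deduction the paper intends: the corollary is stated without proof right after Lemma \ref{teo6}, and your reading-off of the case $n+2r=2$ from the tension field formula, together with the injectivity of $F_{*}$ on the horizontal distribution, is precisely the content. The only remark is that $2m+n$ is the dimension of the fibres, so its nonvanishing is automatic in the submersion setting; otherwise there is nothing to add.
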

Now we obtain necessary and sufficient condition for a conformal semi-invariant submersion to be totally geodesic. We recall that a differentiable map $F$ between two Riemannian manifolds is called totally geodesic if $$(\nabla F_{*})(X,Y)=0 \mbox{}\ \ \forall X,Y\in\Gamma(TM).$$
A geometric interpretation of a totally geodesic map is that it maps every geodesic in the total manifold into a geodesic in the base manifold in proportion to arc lengths.
We now present the following definition.
\begin{definition}
Let $F$ be a conformal semi-invariant submersion from a K\"{a}hler manifold $(M,g_{M},J)$ to a Riemannian manifold $(N,g_{N})$. Then $F$ is called a $(JD_{2},\mu)$-totally geodesic map if
$$(\nabla F_{*})(JU,X)=0, for \ U\in\Gamma(D_{2})\ and \ X\in\Gamma((kerF_{*})^\perp).$$
\end{definition}
In the sequel we show that this notion has an important effect on the geometry of the conformal submersion.
\begin{theorem}
Let $F$ be a conformal semi-invariant submersion from a K\"{a}hler manifold $(M,g_{M},J)$ to a Riemannian manifold $(N,g_{N})$. Then $F$ is  a $(JD_{2},\mu)$-totally geodesic map if and only if $F$ is horizontally homothetic map.
\end{theorem}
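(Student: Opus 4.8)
The plan is to extract an explicit formula for $(\nabla F_{*})(JU,X)$ and then read off both implications from it. The key observation is that for $U\in\Gamma(D_{2})$ and $X\in\Gamma((kerF_{*})^\perp)$ the field $JU$ lies in $\Gamma(J(D_{2}))\subseteq\Gamma((kerF_{*})^\perp)$, so $JU$ and $X$ are both horizontal and Lemma \ref{lem1}(i) applies directly (with $Y=JU$):
\begin{equation*}
(\nabla F_{*})(JU,X)=JU(\ln\lambda)F_{*}(X)+X(\ln\lambda)F_{*}(JU)-g_{M}(JU,X)F_{*}(\mathrm{grad}\,\ln\lambda).
\end{equation*}
Besides this identity I will only use that $F_{*}$ restricted to the horizontal space is a conformal, hence injective, isomorphism onto $TN$ and that, as noted just before Theorem \ref{teo4}, $F_{*}(JD_{2})$ and $F_{*}(\mu)$ are mutually orthogonal. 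For the ``if'' direction, assume $F$ is horizontally homothetic, i.e. $\mathcal{H}(\mathrm{grad}\,\lambda)=0$, equivalently $\mathrm{grad}\,\ln\lambda\in\Gamma(kerF_{*})$. Then $JU(\ln\lambda)=g_{M}(\mathrm{grad}\,\ln\lambda,JU)=0$ and $X(\ln\lambda)=g_{M}(\mathrm{grad}\,\ln\lambda,X)=0$ since $JU,X$ are horizontal, while $F_{*}(\mathrm{grad}\,\ln\lambda)=0$ since $\mathrm{grad}\,\ln\lambda$ is vertical; hence every term above vanishes, so $(\nabla F_{*})(JU,X)=0$ and $F$ is a $(JD_{2},\mu)$-totally geodesic map.

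For the ``only if'' direction, assume $(\nabla F_{*})(JU,X)=0$ for all $U\in\Gamma(D_{2})$ and $X\in\Gamma((kerF_{*})^\perp)$. First test against $X\in\Gamma(\mu)$: then $g_{M}(JU,X)=0$ and the identity collapses to $JU(\ln\lambda)F_{*}(X)+X(\ln\lambda)F_{*}(JU)=0$; since $F_{*}(\mu)$ and $F_{*}(JD_{2})$ are orthogonal and $F_{*}$ is injective on horizontal vectors, both scalar coefficients vanish, so $JU(\ln\lambda)=0$ and $X(\ln\lambda)=0$. If $\mu\neq\{0\}$ this already shows that $\mathrm{grad}\,\ln\lambda$ is orthogonal to $J(D_{2})\oplus\mu=(kerF_{*})^\perp$, that is $\mathcal{H}(\mathrm{grad}\,\ln\lambda)=0$, so $F$ is horizontally homothetic. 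In the remaining case $\mu=\{0\}$ (the conformal anti-holomorphic situation) one instead takes $X=JU'$ with $U'\in\Gamma(D_{2})$; writing $\mathcal{H}(\mathrm{grad}\,\ln\lambda)=JW$ with $W\in\Gamma(D_{2})$ and using $F_{*}(\mathrm{grad}\,\ln\lambda)=F_{*}(JW)$ together with $JU(\ln\lambda)=g_{M}(W,U)$, substituting into the identity and then applying the horizontal inverse of $F_{*}$ followed by $J$ yields
\begin{equation*}
g_{M}(W,U)\,U'+g_{M}(W,U')\,U-g_{M}(U,U')\,W=0
\end{equation*}
for all $U,U'\in\Gamma(D_{2})$; pairing with $W$ and setting $U'=U$ gives $2g_{M}(W,U)^{2}=g_{M}(U,U)\,g_{M}(W,W)$, which forces $W=0$ (take $U$ orthogonal to $W$ when $\dim D_{2}\geq 2$, and argue directly when $\dim D_{2}=1$), hence again $\mathcal{H}(\mathrm{grad}\,\ln\lambda)=0$.

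The formula for $(\nabla F_{*})(JU,X)$ and the ``if'' direction are immediate from Lemma \ref{lem1}(i); the substantive part is the ``only if'' direction, where the orthogonality of $F_{*}(JD_{2})$ and $F_{*}(\mu)$ and the injectivity of $F_{*}$ on horizontal vectors are used to decouple the two scalar coefficients, and where the conformal anti-holomorphic case $\mu=\{0\}$ calls for the small Euclidean computation inside $D_{2}$ displayed above.
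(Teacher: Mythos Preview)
Your argument is correct and follows the same route as the paper: you apply Lemma~\ref{lem1}(i) to the horizontal pair $(JU,X)$, the ``if'' direction is immediate, and for the converse with $X\in\Gamma(\mu)$ you decouple the two scalar coefficients using the orthogonality of $F_{*}(JD_{2})$ and $F_{*}(\mu)$ (the paper does the equivalent thing by taking inner products with $F_{*}JU$ and $F_{*}X$). The one genuine difference is that you treat the anti-holomorphic case $\mu=\{0\}$ separately, deriving the relation $g_{M}(W,U)\,U'+g_{M}(W,U')\,U-g_{M}(U,U')\,W=0$ inside $D_{2}$ and forcing $W=0$; the paper's proof tacitly assumes $\mu\neq\{0\}$, so your version is actually more complete.
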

\begin{proof}
For $U\in\Gamma(D_{2})$ and $X\in\Gamma(\mu)$, from Lemma \ref{lem1}, we have
\begin{equation*}
(\nabla F_{*})(JU,X)=JU(ln\lambda)F_{*}X+X(ln\lambda)F_{*}JU-g_{M}(JU,X)F_{*}(gradln\lambda).
\end{equation*}
From above equation, if $F$ is a horizontally homothetic then $(\nabla F_{*})(JU,X)=0.$ Conversely, if $(\nabla F_{*})(JU,X)=0,$ we obtain
\begin{equation}
JU(ln\lambda)F_{*}X+X(ln\lambda)F_{*}JU=0.\label{e.q:4.4}
\end{equation}
Taking inner product in (\ref{e.q:4.4}) with $F_{*}JU$ and since $F$ is a conformal  submersion, we write
\begin{equation*}
g_{M}(gradln\lambda,JU)g_{N}(F_{*}X,F_{*}JU)+g_{M}(gradln\lambda,X)g_{N}(F_{*}JU,F_{*}JU)=0.
\end{equation*}
Above equation implies that $\lambda$ is a constant on $\Gamma(\mu).$ On the other hand, taking inner product in (\ref{e.q:4.4}) with $F_{*}X$, we have
\begin{equation*}
g_{M}(gradln\lambda,JU)g_{N}(F_{*}X,F_{*}X)+g_{M}(gradln\lambda,X)g_{N}(F_{*}JU,F_{*}X)=0.
\end{equation*}
From above equation, it follows that $\lambda$ is a constant on $\Gamma(JD_{2}).$ Thus $\lambda$ is a constant on $\Gamma((kerF_{*})^\perp).$ Hence proof is complete.
\end{proof}
Finally we give necessary and sufficient conditions for a conformal semi-invariant submersion to be totally geodesic.
\begin{theorem}
Let $F:(M,g_{M},J)\longrightarrow (N,g_{N})$ be a conformal semi-invariant submersion, where $(M,g_{M},J)$ is a K\"{a}hler manifold and $(N,g_{N})$ is a Riemannian manifold. $F$ totally geodesic map if and only if
\begin{enumerate}
\item [(a)] $\mathcal{C}T_{U}JV+\omega\hat{\nabla}_{U}JV=0 \ \ U,V\in\Gamma(D_{1})$
\item [(b)] $\mathcal{C}\mathcal{H}\nabla^{^M}_{U}JW+\omega T_{U}JW=0\ \ U\in\Gamma(ker F_*),\ W\in\Gamma(D_{2})$
\item [(c)] $F$ is a horizontally homothetic map.
\end{enumerate}
\end{theorem}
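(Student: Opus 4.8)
The plan is to compute $(\nabla F_*)(E_1,E_2)$ for all pairs of vector fields on $M$ by decomposing each argument according to the orthogonal splitting $TM = D_1 \oplus D_2 \oplus JD_2 \oplus \mu$, and to show that the vanishing of the full second fundamental form is equivalent to the three listed conditions. Since $(\nabla F_*)$ is symmetric and $(\nabla F_*)(X,Y)$ is already controlled by Lemma~\ref{lem1}(i) on the horizontal distribution $(kerF_*)^\perp = JD_2 \oplus \mu$, the heart of the matter is to analyze the mixed terms $(\nabla F_*)(U,X)$ for $U\in\Gamma(kerF_*)$, $X\in\Gamma((kerF_*)^\perp)$, and the vertical terms $(\nabla F_*)(U,V)$ for $U,V\in\Gamma(kerF_*)$, using Lemma~\ref{lem1}(ii),(iii).

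First I would record that, by Lemma~\ref{lem1}(i), $(\nabla F_*)(X,Y) = 0$ for all horizontal $X,Y$ precisely when $F$ is horizontally homothetic, which gives condition (c); indeed on $\mu$ one argues as in the preceding totally-geodesic theorem by pairing against $F_*X$ and $F_*JU$. Next, for $U,V\in\Gamma(kerF_*)$, Lemma~\ref{lem1}(ii) gives $(\nabla F_*)(U,V) = -F_*(T_UV)$, so this term vanishes iff $T_UV\in\Gamma(kerF_*)$, i.e. iff $\mathcal{H}\nabla^{^M}_U V = 0$. To convert this into the algebraic conditions (a) and (b) I would apply $J$, use the Kähler identity $(\nabla^{^M}_U J)V = 0$ to write $\nabla^{^M}_U(JV) = J\nabla^{^M}_U V$, and then split $JV$ via \eqref{e.q:3.3}: for $V\in\Gamma(D_1)$ one has $JV=\phi V\in\Gamma(D_1)$, and expanding $\nabla^{^M}_U(JV)$ with \eqref{nvw} and applying the tangential/normal operators $\mathcal{C},\omega$ of \eqref{e.q:3.4}, \eqref{e.q:3.3} yields exactly $\mathcal{C}T_U JV + \omega\hat\nabla_U JV = 0$, which is (a). For $W\in\Gamma(D_2)$, $JW$ is horizontal, so one expands $\nabla^{^M}_U(JW)$ using \eqref{nvx}, again hitting it with $\mathcal{C}$ and $\omega$, to obtain $\mathcal{C}\mathcal{H}\nabla^{^M}_U JW + \omega T_U JW = 0$, which is (b).

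It then remains to check the mixed term $(\nabla F_*)(U,X)$ with $U$ vertical and $X$ horizontal, and to verify that it is already forced to vanish by (a), (b), (c) together with horizontal homotheticity; here one uses Lemma~\ref{lem1}(iii), $(\nabla F_*)(U,X) = -F_*(A_X U)$, and the skew-symmetry of $A_X$ to relate $g_M(A_XU,Y)$ for horizontal $Y$ back to the vertical components already handled, splitting $X$ into its $JD_2$ and $\mu$ parts and using the Kähler condition once more. I expect the main obstacle to be bookkeeping: keeping the four-fold decomposition $D_1\oplus D_2\oplus JD_2\oplus\mu$ straight through repeated applications of $J$, \eqref{e.q:3.3}, \eqref{e.q:3.4}, and the O'Neill formulas \eqref{nvw}--\eqref{nxy}, and making sure that each of the three stated conditions is genuinely independent and that no hidden fourth condition slips in. Care is needed in particular at the $\mu$-part of the mixed term, where the conformal factor enters through $grad\,ln\lambda$ and must be shown to be killed precisely by horizontal homotheticity.
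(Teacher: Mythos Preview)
Your treatment of parts (a), (b), and (c) is essentially the paper's argument. The paper computes $(\nabla F_*)(U,V)=F_*(J\nabla^{^M}_U JV)$ directly and then decomposes via $\mathcal{B},\mathcal{C},\phi,\omega$; you instead start from $(\nabla F_*)(U,V)=-F_*(T_UV)$ and then rewrite $T_UV$ through $J$. These are the same computation in a different order: indeed $T_UV=-(\mathcal{C}T_UJV+\omega\hat\nabla_UJV)$ for $U,V\in\Gamma(D_1)$, so your condition ``$T_UV=0$'' is exactly (a). Likewise for (b), and your handling of (c) via Lemma~\ref{lem1}(i) is identical to the paper's.

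Where you go beyond the paper is the mixed term $(\nabla F_*)(U,X)=-F_*(A_XU)$ for $U$ vertical and $X$ horizontal: the paper's proof simply does not address this case. You correctly flag it, but your plan to show it is ``already forced to vanish by (a), (b), (c)'' is optimistic and is precisely the place your own closing worry (``no hidden fourth condition'') is justified. Conditions (a) and (b) constrain the $T$ tensor on the fibres, and (c) constrains $\mathcal{H}\,grad\ln\lambda$; none of these obviously controls $A_XU$. In particular, for $X\in\Gamma(\mu)$ the quantity $A_XU=\mathcal{H}\nabla^{^M}_XU$ is an independent piece of data, and the skew-symmetry of $A_X$ only relates it to $g_M(U,A_XY)$ for horizontal $Y$, not to the vertical $T$ tensor. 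So when you carry out that step you should expect either to discover an additional condition, or to need a further identity specific to the K\"{a}hler structure; do not assume the three stated conditions suffice without an explicit verification.
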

\begin{proof}
(a) For $U,V\in\Gamma(D_{1})$, using (\ref{e.q:2.2}), (\ref{nfixy}) and (\ref{nvw}) we have
\begin{equation*}
(\nabla F_{*})(U,V)=F_{*}(J(T_{U}JV+\hat{\nabla}_{U}JV)).
\end{equation*}
Using (\ref{e.q:3.3}) and (\ref{e.q:3.4}) in above equation we obtain
\begin{equation*}
(\nabla F_{*})(U,V)=F_{*}(\mathcal{B}T_{U}JV+\mathcal{C}T_{U}JV+\phi\hat{\nabla}_{U}JV+\omega\hat{\nabla}_{U}JV).
\end{equation*}
Since $\mathcal{B}T_{U}JV+\phi\hat{\nabla}_{U}JV\in\Gamma(kerF_{*})$, we derive
\begin{equation*}
(\nabla F_{*})(U,V)=F_{*}(\mathcal{C}T_{U}JV+\omega\hat{\nabla}_{U}JV).
\end{equation*}
Then, since $F$ is a linear isometry between $(kerF_{*})^\perp$ and $TN$, $(\nabla F_{*})(U,V)=0$ if and only if $\mathcal{C}T_{U}JV+\omega\hat{\nabla}_{U}JV=0.$\\
(b) For $U\in\Gamma(ker F_*),\ W\in\Gamma(D_{2})$, using (\ref{e.q:2.2}) and (\ref{nfixy}) we have
\begin{align*}
(\nabla F_{*})(U,W)&=\nabla^{F}_{U}F_{*}W-F_{*}(\nabla^{^M}_{U}W)\\
&=F_{*}(J\nabla^{^M}_{U}JW).
\end{align*}
Then from (\ref{nvx}) we arrive at
\begin{equation*}
(\nabla F_{*})(U,W)=F_{*}(J(T_{U}JW+\mathcal{H}\nabla^{^M}_{U}JW)).
\end{equation*}
Using (\ref{e.q:3.3}) and (\ref{e.q:3.4}) in above equation we obtain
\begin{equation*}
(\nabla F_{*})(U,W)=F_{*}(\phi T_{U}JW+\omega T_{U}JW+\mathcal{B}\mathcal{H}\nabla^{^M}_{U}JW+\mathcal{C}\mathcal{H}\nabla^{^M}_{U}JW).
\end{equation*}
Since $\phi T_{U}JW+\mathcal{B}\mathcal{H}\nabla^{^M}_{U}JW\in\Gamma(kerF_{*})$, we derive
\begin{equation*}
(\nabla F_{*})(U,W)=F_{*}(\omega T_{U}JW+\mathcal{C}\mathcal{H}\nabla^{^M}_{U}JW).
\end{equation*}
Then, since $F$ is a linear isometry between $(kerF_{*})^\perp$ and $TN$, $(\nabla F_{*})(U,W)=0$ if and only if
$\omega T_{U}JW+\mathcal{C}\mathcal{H}\nabla^{^M}_{U}JW=0.$\\
(c) For $X,Y\in\Gamma(\mu)$, from Lemma \ref{lem1}, we have
\begin{equation*}
(\nabla F_{*})(X,Y)=X(ln\lambda)F_{*}Y+Y(ln\lambda)F_{*}X-g_{M}(X,Y)F_{*}(gradln\lambda).
\end{equation*}
From above equation, taking $Y=JX$ for $X\in\Gamma(\mu)$ we obtain
\begin{align*}
(\nabla F_{*})(X,JX)&=X(ln\lambda)F_{*}JX+JX(ln\lambda)F_{*}X-g_{M}(X,JX)F_{*}(gradln\lambda)\\
&=X(ln\lambda)F_{*}JX+JX(ln\lambda)F_{*}X.
\end{align*}
If $(\nabla F_{*})(X,JX)=0,$ we obtain
\begin{equation}
X(ln\lambda)F_{*}JX+JX(ln\lambda)F_{*}X=0. \label{e.q:4.5}
\end{equation}
Taking inner product in (\ref{e.q:4.5}) with $F_{*}X$ and since $F$ is a conformal submersion, we write
\begin{equation*}
g_{M}(gradln\lambda,X)g_{N}(F_{*}JX,F_{*}X)+g_{M}(gradln\lambda,JX)g_{N}(F_{*}X,F_{*}X)=0.
\end{equation*}
Above equation implies that $\lambda$ is a constant on $\Gamma(J\mu).$ On the other hand, taking inner product in (\ref{e.q:4.5}) with $F_{*}JX$ we have
\begin{equation*}
g_{M}(gradln\lambda,X)g_{N}(F_{*}JX,F_{*}JX)+g_{M}(gradln\lambda,X)g_{N}(F_{*}X,F_{*}JX)=0.
\end{equation*}
From above equation, it follows that $\lambda$ is a constant $\Gamma(\mu).$ In a similar way, for $U,V\in\Gamma(D_{2})$, using Lemma \ref{lem1} we have
\begin{equation*}
(\nabla F_{*})(JU,JV)=JU(ln\lambda)F_{*}JV+JV(ln\lambda)F_{*}JU-g_{M}(JU,JV)F_{*}(gradln\lambda).
\end{equation*}
From above equation, taking $V=U$ we obtain
\begin{align}
(\nabla F_{*})(JU,JU)&=JU(ln\lambda)F_{*}JU+JU(ln\lambda)F_{*}JU-g_{M}(JU,JU)F_{*}(gradln\lambda)\label{e.q:4.6}\\
&=2JU(ln\lambda)F_{*}JU-g_{M}(JU,JU)F_{*}(gradln\lambda).\notag
\end{align}
Taking inner product in (\ref{e.q:4.6}) with $F_{*}JU$ and since $F$ is a conformal submersion, we derive
\begin{equation*}
2g_{M}(gradln\lambda,JU)g_{N}(F_{*}JU,F_{*}JU)-g_{M}(JU,JU)g_{N}(F_{*}(gradln\lambda),F_{*}JU)=0.
\end{equation*}
From above equation, it follows that $\lambda$ is a constant on $\Gamma(JD_{2})$. Thus $\lambda$ is a constant on $\Gamma((kerF_{*})^\perp)$. If $F$ is a horizontally homothetic map, then $F_*(grad ln \lambda)$ vanishes, thus the converse is clear, i.e. $(\nabla F_*)(X,Y)=0$ for $X, Y \in \Gamma((ker F_*)^\perp)$. Hence proof is complete.
\end{proof}

\end{document}